\newcommandx{\unsure}[2][1=]{\todo[linecolor=red,backgroundcolor=red!25,bordercolor=red,#1]{#2}}
\newcommandx{\change}[2][1=]{\todo[linecolor=blue,backgroundcolor=blue!25,bordercolor=blue,#1]{#2}}
\newcommandx{\improvement}[2][1=]{\todo[linecolor=Plum,backgroundcolor=Plum!25,bordercolor=Plum,#1]{#2}}
\newcommandx{\thiswillnotshow}[2][1=]{\todo[disable,#1]{#2}}
\renewcommand{\d}{\mathrm{d}}
\newcommand{\D}{\mathcal{D}}
\newcommand{\J}{\mathcal J}
\newcommand{\RR}{{\mathbb R}}
\newcommand{\NN}{\mathbb{N}}
\newcommand{\haz}{\widehat}
\newcommand{\We}{W_{\rm e}}
\newcommand{\Fe}{F_{\rm e}}
\newcommand{\ye}{y_{\rm e}}
\newcommand{\hye}{{\haz y}_{\rm e}}
\newcommand{\hyp}{{\haz y}_{\rm p}}
\newcommand{\hyen}{{\haz y}_{\rm e}^n}
\newcommand{\qe}{{q_{\rm e}}}
\newcommand{\yp}{y_{\rm p}}
\newcommand{\qp}{{q_{\rm p}}}
\newcommand{\ypo}{y_{{\rm p}0}}
\newcommand{\yeo}{y_{{\rm e}0}}
\newcommand{\ypi}{y_{{\rm p}i}}
\newcommand{\yei}{y_{{\rm e}i}}
\newcommand{\ypj}{y_{{\rm p}j}}
\newcommand{\yej}{y_{{\rm e}j}}
\newcommand{\ypii}{y_{{\rm p}(i-1)}}
\newcommand{\yeii}{y_{{\rm e}(i-1)}}
\newcommand{\ypk}{y_{{\rm p}k}}
\newcommand{\yek}{y_{{\rm e}k}}
\newcommand{\Wp}{W_{\rm p}}
\newcommand{\Fp}{F_{\rm p}}
\newcommand{\Fpo}{F_{\rm p0}}
\newcommand{\Fpoo}{F_{\rm p2}}
\newcommand{\Fpi}{F_{{\rm p}i}}
\newcommand{\Fpj}{F_{\rm p1}}
\newcommand{\GL}{{\rm GL}(d)}
\newcommand{\SL}{{\rm SL}(d)}
\newcommand{\SO}{{\rm SO}(d)}
\newcommand{\id}{{\rm id}}
\newcommand{\cof}{{\rm  cof\,}}
\renewcommand{\det}{{\rm  det\,}}
\newcommand{\curl}{{\rm curl\,}}
\newcommand{\Diss}{\textrm{Diss}_\D}
\DeclareMathOperator{\argmin}{arg\,min}
\newcommand{\E}{\mathcal{E}}
\newcommand{\wto}{\rightharpoonup}
\newcommand{\wstar}{\overset{\ast}{\rightharpoonup}}
\renewcommand{\d}{{\rm d}}
\def\argmin{\,{\rm argmin \ }}
\def\W{\mathcal W}
\def\Q{\mathcal Q}
\def\eps{\varepsilon}
\newcommand{\dist}{\mathrm{dist}}
\newcommand{\Om}{\Omega}
\definecolor{myred}{rgb}{0.8, 0, 0.1}
\numberwithin{equation}{section}
\mathchardef\emptyset="001F
\newtheorem{theorem}{Theorem}[section]
\newtheorem{lemma}[theorem]{Lemma}
\theoremstyle{definition}
\newtheorem{remark}[theorem]{Remark}
\newtheorem{definition}[theorem]{Definition}
\title[Quasistatic evolution for dislocation-free finite
plasticity]{Quasistatic evolution for \\ dislocation-free finite plasticity}
\author{Martin Kru\v z\'ik} 
\address[Martin Kru\v z\'ik]{Institute of Information Theory and Automation, Czech Academy of Sciences, Pod vod\'arenskou ve\v z\'i 4, 182 08 Prague, Czechia and Faculty of Civil Engineering, Czech Technical University, Th\'{a}kurova 7, 166 29 Prague, Czechia}
\email{kruzik@utia.cas.cz}
\urladdr{http://staff.utia.cas.cz/kruzik/}
\author{David Melching}
\address[David Melching]{Faculty of Mathematics, University of Vienna, 
Oskar-Morgenstern-Platz 1, 1090 Wien, Austria}
\email{david.melching@univie.ac.at}
\urladdr{http://www.mat.univie.ac.at/$\sim$melching}
\author{Ulisse Stefanelli} 
\address[Ulisse Stefanelli]{Faculty of Mathematics, University of Vienna, 
Oskar-Morgenstern-Platz 1, 1090 Wien, Austria  and  Istituto di Matematica
Applicata e Tecnologie Informatiche \textit{{E. Magenes}}, v. Ferrata 1, 27100
Pavia, Italy.}
\email{ulisse.stefanelli@univie.ac.at}
\urladdr{http://www.mat.univie.ac.at/$\sim$stefanelli}
\date{\today}
\subjclass[2010]{35Q74, 49J40,  74C15}
\keywords{Elasticity, Plasticity, Quasistatic evolution}
\begin{document}

\begin{abstract}
We investigate quasistatic evolution in finite
plasticity under the assumption that the plastic strain is compatible. This
assumption is well-suited to describe the special case of dislocation-free
plasticity and entails that  the plastic strain is the gradient of a plastic
deformation map. The total deformation can be then seen as the
composition of a plastic and an elastic deformation. This opens
the way to an existence theory for the quasistatic evolution problem
featuring both Lagrangian and Eulerian variables. A remarkable trait
of the result is that it does not require second-order
gradients. 
% We define a new concept of a solution to quasistatic evolutionary problems  which we  call  quasistatic evolution.  
% We prove  the existence of such  solution to a rate-independent  problem of elasto-plasticity in the framework of large deformations. We do not assume  the presence of the plastic strain gradient in the model but instead we postulate that admissible plastic strains are gradients of Sobolev maps.  This is motivated by a two-dimensional example where this constraints appears as a consequence of the div-curl lemma. 
\end{abstract}

\maketitle

\section{Introduction}

The elastoplastic behavior of a crystalline solid under
the action of external loads  results from a
combination of reversible elastic and irreversible plastic
effects \cite{maugin}. The state of the body 
is specified in terms of its deformation $y:\Omega \to \RR^3 $ from a reference
configuration $\Omega \subset \RR^3$. Elastic and plastic effect are
classically assumed to combine via the {\it  Kr\"{o}ner-Lee-Liu  multiplicative decomposition} of the  total  strain $\nabla y = F_{\rm e}F_{\rm p}$  \cite{Kroener,Lee-Liu67,Lee69}. Here, the elastic strain $F_{\rm e}$ describes
the elastic response of the medium, whereas the plastic strain $F_{\rm p}$
records the accumulation of plastic distortion \cite{Gurtin10}. In
metals, it is usually assumed that plastic effects induce no volume change,
namely $\det F_{\rm p}=1$ \cite{SH98}.

Elastoplastic evolution results from the competition of elastic-energy
storage and plastic-dissipation mechanisms. As such, a common and successful
approach to the description of elastoplasticity of crystalline materials is via variational methods %, see e.g.~Ortiz \& Repetto 
\cite{ortiz}. %This is
%manifested for elastic crystals, even for those with the potential of
%undergoing phase transitions. The applicability of variational methods
%has been broadened to include rate-independent evolution
%\cite{Mielke-Roubicek}. 
The energy of the specimen is often assumed to be of the form
\begin{equation}
\int_\Omega \We(\nabla y F_{\rm p}^{-1}) \, \d x + \int_\Omega
\Wp(F_{\rm p})\, \d x,
\label{E}
\end{equation}
where $\We$ is the elastic-energy density, a function of the elastic
strain  $F_{\rm e}=\nabla y F_{\rm p}^{-1}$, and $\Wp$ is a
hardening-energy density. In the {\it incremental} setting of the
elastoplastic evolution problem, given external loads and boundary conditions, 
one minimizes the energy, augmented by a dissipation term $\mathcal
D(\Fpo, \Fp)$ \cite{Mielke03b}. The latter measures the distance of the actual
plastic strain $F_{\rm p}$ from the previous $F_{\rm p0}$. This
inspires different solution notions on the time-continuous, {\it quasistatic} evolution level \cite{Mielke-Roubicek}.

In view of the mathematical treatment of
finite plasticity, one is hence confronted with the necessity of controlling the product $\nabla y F_{\rm p}^{-1}$. This is
indeed a critical point, for weak topologies are not sufficient in
order to identify this product within a corresponding limit passage. Such observation has sparked the interest for so-called {\it second-order} theories, where a term featuring the gradient $\nabla F_{\rm p}$ is included in the
energy. This gradient term models nonlocal effects caused by
short-range interactions among dislocations \cite{dillon,gurtin,gurtin-anand}. From a mathematical standpoint, the presence of the gradient $\nabla F_{\rm p}$ in the energy contributes strong compactness for $ F_{\rm p}$,
which then allows to pass to the
limit in the product $\nabla y F_{\rm
p}^{-1}$.

To date, multidimensional existence results for incremental and quasistatic evolutions are just a few and all hinge on second-order
theories \cite{Davoli1,cplas_part2,souza,mainik-mielke2,Mielke-Mueller,mielke-roubicek16}. However, in finite plasticity these second-order gradient theories are still debated from the modeling
standpoint. In particular, it is not clear which function of the
gradient should be used. We refer to \cite{kratochvil,bako,zaiser} for
attempts  to derive it from statistical physics, revealing the complexity
of this issue. A related  approach to nonlocal models in damage
and plasticity  was undertaken in \cite{bazant}, see also \cite{fleck-hutch1,fleck-hutch2,fremond,maugin}.

Our aim is to investigate existence for
quasistatic evolutions not relying on second-order theories, namely
in absence of a regularizing gradient term $\nabla F_{\rm p}$. This
follows the analysis of \cite{Stefanelli18}, where the same issue was
considered at the incremental level. The price to pay for allowing
such an existence result is that of restricting the analysis to the
case of  {\it compatible} plastic strains $ F_{\rm p}$, namely to impose $\curl F_{\rm p}=0$. This case corresponds to {\it dislocation-free} elastoplastic
evolution. Albeit not generic, such situation  still includes plastic slips 
\cite{Conti-Reina} and may actually
occur in ductile metals \cite{Kiritani,Matsukawa}. This is
particularly relevant in case of small bodies. Indeed, dislocation dynamics is strongly size-dependent \cite{Greer,Uchic} so that very small dislocation-free bodies may
plasticize without nucleating dislocations. 

In case of compatibility, one can identify the plastic strain  $F_{\rm
  p}$ with a gradient of a {\it plastic deformation} $\yp:\Omega \to
\yp(\Omega)\subset \RR^3$, mapping indeed the reference configuration to the so-called
{\it intermediate} one. At the same time, this defines an {\it elastic
  deformation} $\ye: \yp(\Omega)\to \RR^3 $ from the intermediate to
the actual configuration such that the decomposition
\begin{equation}\label{compos}
y = \ye \circ \yp
\end{equation}
holds. The latter of course entails the multiplicative decomposition
$\nabla y = \nabla \ye \nabla \yp$ via the classical chain rule. On
the other hand, by assuming $\yp$ to be injective, it allows for
rewriting the energy in \eqref{E}, by a change of variables, as
\begin{equation}
\int_{\yp(\Omega)} \We(\nabla \ye) \, \d \xi + \int_\Omega
\Wp(\nabla \yp)\, \d x.\label{E2}
\end{equation}
This reformulation of the energy is particularly advantageous from the
mathematical viewpoint, for it does not feature the product term
$\nabla y F_{\rm p}^{-1}$ anymore. This in turn allows for an existence theory via
classical variational methods, even in absence of strong compactness for $F_{\rm
  p}=\nabla \yp$. Indeed, in two space dimensions, by assuming $F_{\rm
  p}=\nabla \yp$ one would even be able to directly identify the limit in $\nabla y
(\nabla \yp)^{-1} $ via the classical div-curl lemma as $\nabla y$ is curl-free and ${\rm div} \, (\nabla \yp)^{-T}= 0$ if $\det \nabla \yp=1$, see also \cite{Conti11}.

Arguing via reformulation \eqref{E2}
calls for the treatment of both {\it Lagrangian} and {\it Eulerian}
terms, respectively defined on the reference and on the intermediate
configuration, which itself depends on part of the solution. This kind
of mixed Lagrangian-Eulerian problems has to be traced back at least to
\cite{Fonseca-Parry,Dacorogna-Fonseca88} for the case of defective
crystals and has recently attracted
attention in connection with nematic elastomers
\cite{Barchiesi-DeSimone,Barchiesi}, magnetoelasticity
\cite{Barchiesi,rainy,Rybka,Sil18,Tomassetti}, solid-solid phase change
\cite{GKMS18,Sil11} and, as already mentioned,
incremental finite plasticity \cite{Stefanelli18}. A decomposition of type \eqref{compos} has recently also been used as a starting point to model dissolution-precipitation creep \cite{Hackl15}.

The main result of this paper is the existence of {\it incrementally approximable
quasistatic evolutions},  see Theorem
\ref{existence:quasistatic}.  This  
notion of solution features stability and energy balance on the
time-discrete level as well as  {\it semistability} relation with
respect to elastic deformations and an {\it energy inequality} in the
time-continuous limit.  A similar notion  has been considered in
\cite{DalMaso-Lazzaroni-2010} in the quasistatic setting and in
\cite{RoegerSchweizer17}  for viscoplasticity  and is weaker than the concept of {\it energetic solutions} \cite{Mielke-Roubicek}.  Still, it implies the validity of
the quasistatic equilibrium system as well as the dissipative character of the
evolution. %  and we can show {\it stability} under plastic test functions satisfying an extensibility condition, Proposition \ref{cond_stab}, or for approximable quasistatic evolutions with extensible plastic deformation, Remark \ref{cond_stab_rmk}

The existence proof follows the classical time-discretization
strategy. Discrete-in-time solutions are found by solving incremental
problems on a given time partition and a quasistatic evolution is then
recovered as the fineness of the partitions tends to zero. In order to
check for the energy inequality, the
lower semicontinuity of the energy and dissipation functionals plays a
crucial role. This results from the {\it weak compactness} of the minors
of $\nabla \ye$ and $\nabla \yp$ (see \eqref{E2}) under the assumption
of {\it polyconvex} densities \cite{Ball76}. The passage to the limit
in the discrete semistability requires an ad hoc recovery-sequence
construction, which in turn hinges upon the possibility of extending
elastic deformations to a neighborhood of the intermediate
configuration. In order to be able to achieve this, intermediate
configurations are asked to have regular boundaries. More precisely, they are restricted to belong to a certain uniform subclass of Jones domains \cite{Jones}, see
Subsection \ref{domains}.

The mechanical model and its variational formulation are introduced in
Section \ref{sec:setting} and
the main result is stated in Subsection \ref{sec:statement}. The existence proof is then detailed in Section \ref{proofs}.

\section{Main result}\label{sec:setting}
This section brings us to the formulation of our main result,
Theorem \ref{existence:quasistatic}. We start by introducing our
 assumptions and basic framework in Subsections \ref{notation}-\ref{Diss} and end with our main statement in Subsection \ref{sec:statement}.

\subsection{Notation}\label{notation}
In what follows, we denote by $\RR^{d\times d}$ the Euclidean
space of $d\times d$ real matrices and by $\SL,\GL$, and $\SO$ its subspaces of matrices with unit determinant, invertible matrices, and proper rotations, respectively. Using $\mathcal L^d$ and $\mathcal H^{d-1}$ we refer to the $d$-dimensional Lebesgue measure and the
$(d-1)$-dimensional Hausdorff measure. 
The norm on a generic Banach space $E$ is denoted by $\|
\cdot \|_E$ and we use the standard notation for Sobolev and Lebesgue spaces. By default, we denote by $f_n \to f$ strong convergence, whereas $f_n \wto f$ means weak convergence.

\subsection{Deformations}\label{deformations}
Let $d \ge 2$ and $\Omega \subset \RR^d$ be a non-empty, open,
simply connected, bounded domain with Lipschitz boundary. The boundary is
essentially split into a Dirichlet part $\Gamma_D$ and a Neumann part
$\Gamma_N$, namely $\partial \Om = \overline{\Gamma}_D \cup
\overline{\Gamma}_N$ with $\Gamma_D$ and $\Gamma_N$ open in $\partial
\Om$ and $\Gamma_D \cap \Gamma_N = \varnothing$ where $\mathcal
H^{d-1}(\Gamma_D) >0$. We indicate by $y: \Om \to \RR^d$ the
deformation of the body $\Om$. 

The crucial assumption of our theory is that the deformation $y$ can be decomposed into elastic and
plastic deformations $\ye$ and $\yp$ as in \eqref{compos}. As
mentioned, this follows from the standard multiplicative decomposition
$\nabla y= F_{\rm e} F_{\rm p}$ in case $F_{\rm p}$ is
 curl-free. Indeed, if $F_{\rm p}=\nabla \yp$ for some plastic
deformation $\yp$ one can easily check
\cite{Stefanelli18} that $F_{\rm e}=\nabla \ye$ for some elastic
deformation $\ye$, so that the multiplicative decomposition
$\nabla y= \nabla \ye \nabla \yp$  follows by \eqref{compos} and the
classical chain rule.  We now detail our assumptions on $\yp$ and $\ye$.

\textit{Plastic deformations.} We assume that the plastic deformation fulfills 
$$\yp \in W^{1,\qp}(\Om;\RR^d) \quad \text{for some 
$\qp > d(d-1)$}$$  
and that it is locally volume preserving,
namely, $\det \nabla \yp = 1$ almost everywhere ({a.e.})  in $\Om$ \cite{Mandel72,SH98}. This
implies that $\yp$ is H\"older continuous with exponent $1-d/\qp$ and
almost everywhere differentiable \cite[Lemma 2.7]{Fonseca-Gangbo}.  From now on, when writing $\yp$ we always mean its continuous representative.
 
The map $\yp$ possesses the so-called Lusin's {\it $N$-property}, namely
$\mathcal L^d(E)=0 \Rightarrow \mathcal L^d(\yp(E))=0$ for all measurable $E \subset \RR^d$, as well as the corresponding {\it $N{^{-1}}$-property}, i.e.
$\mathcal L^d(E')=0 \Rightarrow \mathcal L^d(\yp^{-1}(E'))=0$ for all measurable $E' \subset \RR^d$, see \cite[p. 296]{Goldshtein90}. Moreover, $\yp$ is \textit{locally invertible almost everywhere} \cite[Thm. 3.1, Cor. 3.3]{Fonseca-Gangbo}. This means that for a.e. $x \in \Om$ there exists a ball $B \subset \RR^d$ centered at $\yp(x)$, an open neighborhood $U \subset \Om$ of $x$,  and a local inverse $\yp^{-1}:B \to U$ with $\yp^{-1} \in W^{1,\qp/(d-1)}(B;\RR^d)$ such that $\yp|_{U}$ and $\yp^{-1}$ are onto, $\yp ^{-1}\circ \yp = \id $
a.e.  in $U$,   $\yp\circ \yp ^{-1} = \id $ a.e.  in $B$, and $\nabla \yp^{-1} = (\nabla \yp)^{-1}\circ \yp^{-1}$ a.e.  in $B$.

In view of changing  from Lagrangian to Eulerian variables, we require $\yp$ to be {\it injective almost everywhere}, namely, that there exists a negligible set $N$ such that $\yp$ is injective on $\Omega \setminus N$. This property is implemented by imposing the classical \textit{Ciarlet-Ne\v cas condition} \cite{ciarlet-necas}
\begin{equation}\label{Ciarlet-Necas}
\mathcal{L}^d(\Om) = \int_{\Om} \det (\nabla \yp(x)) \,\d x \le \mathcal{L}^d(\yp(\Om)).
\end{equation}
In this setting, \eqref{Ciarlet-Necas} and injectivity almost everywhere are actually equivalent \cite[Prop. 15]{GP08}. Using injectivity almost everywhere, we  get  the change of variables formula
\begin{equation}
\label{change}
\int_E \phi(\yp(x)) \,\d x = \int_{\yp(E)} \phi(\xi) \,\d \xi
\end{equation}
for every measurable function $\phi:\Om \to \RR^d$ and all measurable
$E \subset \RR^d$, see  \cite[Lem. 2.4]{Fonseca-Gangbo}. Note
that, here and in the following, we use the shorthand $\d x$ for $\d
\mathcal L^d(x)$ when  integrating with respect to Lagrangian coordinates
$x\in \Omega$, and $\d \xi$ for $\d \mathcal L^d(\xi)$ in case of Eulerian
coordinates, namely for integration on the intermediate configuration
$\yp(\Omega)$. 
 
If $\yp \in W^{1,d}(\Om; \RR^d)$ with distortion $K:=|\nabla \yp|^d/\det \nabla \yp \in L^p(\Om;\RR)$ for $p > d-1$, then $\yp$ is either constant or open, \cite[Theorem 3.4]{HenclKoscela14}. Since in our setting $\qp > d(d-1)$ and $\det \nabla \yp =1$,  this integrability requirement is exactly fulfilled. Moreover, by the Ciarlet-Ne\v cas condition \eqref{Ciarlet-Necas}, $\yp$ cannot be constant, which shows that $\yp$ is  open and injective almost everywhere. This implies that $\yp$ is (globally) injective \cite[Lemma 3.3]{GKMS18}, and that $\yp$ is actually a {\it homeomorphism} having inverse $\yp^{-1}$ of regularity
\begin{equation*}
\yp^{-1} \in W^{1,\qp/(d-1)}(\yp(\Om);\RR^d).
\end{equation*}
Note that, if the plastic deformation $\yp$ at the boundary $\partial
\Omega$ was coinciding with that of a  homeomorphism  on $\overline \Omega$, given the integrability of the  distortion  one could resort to the invertibility theory by Ball
\cite{Ball88} to deduce that $\yp$ is actually a homeomorphism, even
without asking for the Ciarlet-Ne\v cas condition
\eqref{Ciarlet-Necas}.  In our case however, we cannot assume to be
able to prescribe $\yp(\partial \Omega)$, for $\yp$ is an internal
variable. In fact, since
the problem is formulated in terms of $\nabla \yp$ only, we
later ask for the normalization condition $\int_\Omega \yp(x)\d x=0$.

\textit{Elastic deformations.} Given a plastic deformation $\yp$, we assume the elastic deformation $\ye$, defined on the intermediate configuration $\yp(\Om)$, to satisfy
\begin{equation*}
\ye \in W^{1,\qe}(\yp(\Om);\RR^d)\quad \text{for some $\qe > d$.}
\end{equation*}
By using the local invertibility of $\yp$, one checks that the chain rule
\begin{equation}
\label{chain}
\nabla y(x) = \nabla \ye (\yp(x))\, \nabla \yp(x)
\end{equation}
holds for almost every $x \in \Om$, see \cite{Stefanelli18} for details. We can use the change of variables formula \eqref{change} together with the chain rule \eqref{chain} and H\"older's inequality to  estimate 
\begin{equation}
\label{chain:est}
\int_\Omega |\nabla y(x)|^{q}\, \d x \le \left(\int_{\yp(\Omega)} | \nabla \ye(\xi)
|^\qe\, \d \xi \right)^{q/\qe} \left(\int_\Omega |\nabla \yp(x)|^\qp \, \d x
\right)^{q/\qp}, 
\end{equation}
where $q$ is defined by
\begin{equation*}
\frac{1}{q} = \frac{1}{\qe} + \frac{1}{\qp}.
\end{equation*}

%{\it Elastic} Dirichlet boundary conditions are imposed for $y = \ye \circ \yp$ on $\Gamma_D \subset \partial \Om$. We can use a generalized Poincar\'e inequality to control $y$. More precisely, there exists a constant $C_{\rm p} >0$ such that
%\begin{equation*}
%\int_{\Om} |y|^q \, \d x \le C_{\rm p} \Bigg( \int_\Om |\nabla y|^q \, \d x + \left( \int_{\Gamma_D} |y - \text{id}| \, \d \mathcal H^{d-1} \right)^q \Bigg),
%\end{equation*}
%see Lemma \ref{Poincare}.

%\begin{remark}[Boundary conditions]
%	It is possible to choose elastic boundary conditions $y_D$ on $\Gamma_D$ where $y_D$ can be different from $\mathrm{id}$ with regularity such that $y_D$ is the trace of a function $$\haz y_D \in W^{1,q}(\Om;\RR^d).$$ In this case we can apply the generalized Poincar\'e inequality from \cite[Theorem 1.32]{Roubook} to $y-\haz y_D$ and deduce that sublevels of the energy are bounded in $W^{1,q}$.
%%	by estimating
%%	$$\|y\|_{W^{1,q}(\Om)} \le (1+C_{\rm p})\|y_D\|_{W^{1,q}(\Om)} + C_{\rm p}( \|\nabla y\|_{L^q(\Om)} + \|y-y_D\|_{L^1(\Gamma_D)} )$$
%\end{remark}

\subsection{Domains}\label{domains}
In order to carry on our existence proof, some regularity for the intermediate configurations $\yp(\Om)$  is  needed. Our goal is to find conditions under which $\yp(\Om)$ is a {\it Sobolev extension domain}. These are open subsets of $\RR^d$ allowing the extension of Sobolev functions to the whole space. More precisely, $\omega \subset \RR^d$ is a {\it $W^{1,p}$-extension domain}, if and only if one can define a bounded linear operator 
\begin{equation*}
E : W^{1,p}(\omega;\RR^d) \to W^{1,p}(\RR^d;\RR^d)
\end{equation*}
such that $$Eu = u \quad \text{ in } \omega$$ 
for every $u \in W^{1,p}(\omega;\RR^d)$. 
 Additionally, we need to ensure that the class of intermediate domains is closed under Hausdorff convergence of sets, in order to guarantee that the state space is closed. The Hausdorff distance of two non-empty, compact subsets $X,Y$ of $\RR^d$ is defined as 
\begin{equation*}
d_{\rm H}(X,Y) := \inf \{\nu \ge 0: X \subset B_{\nu}(Y), Y \subset B_{\nu}(X)\},
\end{equation*}
where $B_\nu(X):= \{z \in \RR^d: \text{ there exists } x \in X \text{ such that } |x-z| < \nu \} = X + B_\nu(0)$ is an $\nu$-fattening of the set $X$. It is easy to see that 
\begin{equation*}
d_{\rm H}(X,Y) = \max \bigg\{ \sup_{x \in X} \ \dist(x,Y), \;\; \sup_{y \in Y} \ \dist(y,X)\bigg\}, 
\end{equation*}
where $\dist(x,Y):= \inf_{y\in Y} |x-y|$.
We remark that, if 
$\yp^n \wto \yp$ in $W^{1,\qp}(\Om; \RR^d),$
then $\yp^n$ converges uniformly to $\yp$ on $\overline{\Om}$ by the compact Sobolev embedding $W^{1,\qp}(\Om;\RR^d) \subset\subset C^0(\overline{\Om};\RR^d)$. This implies Hausdorff convergence of
the intermediate configurations,  namely
\begin{equation}\label{Hausdorff-conv}
d_{\rm H}(\overline{\yp^n({\Om})},\overline{\yp({\Om})}) \to 0, \quad d_{\rm H}(\partial \yp^n(\Om),\partial \yp(\Om)) \to 0
\end{equation}
as $n$ tends to $\infty$,  see  Lemma \ref{closedness:A} below.

It is well-known that Lipschitz domains are $W^{1,p}$-extension domains for every $1 \le p \le \infty$ \cite{Calderon,Stein}. However, the class of Lipschitz domains is not closed under Hausdorff convergence. We hence focus here on a larger class of domains, the {\it $(\eps,\delta)$-domains} introduced by Jones in \cite{Jones} and defined below. These possess the extension property \cite[Theorem 1]{Jones} and include Lipschitz domains. By restricting to the subclass of uniform $(\eps,\delta)$-domains, we  obtain uniformly bounded extension operators as well as closedness with respect to Hausdorff convergence.

\begin{definition}[$\J_{\eps,\delta}$ domains]
  We say that a bounded, open set $\omega \subset \RR^d$ is an
  $(\eps,\delta)$-domain, denoted $\omega \in \J_{\eps,\delta}$, if
  for every $x,y \in \omega$ with $|x-y| < \delta$ there exists a
  Lipschitz curve $\gamma \in W^{1,\infty}([0,1];\omega)$ with
  $\gamma(0)=x$, and $ \gamma(1)=y$ satisfying the following two conditions:
  \begin{equation}\label{Jones:cond1}
    \ell(\gamma) := \int_0^1 |\dot \gamma(s)| \ \d s \le \frac{1}{\eps} |x-y|
  \end{equation} 
  and
  \begin{equation}\label{Jones:cond2}
    \text{dist}(\gamma(t),\partial \omega) \ge \eps \frac{|x-\gamma(t)||\gamma(t)-y|}{|x-y|} \quad \forall t \in [0,1].
  \end{equation}
\end{definition}

One can immediately see that these classes of domains are nicely ordered in the sense that if $\omega$ is an $(\eps',\delta')$-domain for some $\eps' \ge \eps$ and $\delta' \ge \delta$, then $\omega$ is also an $(\eps,\delta)$-domain. More precisely,
\begin{equation*}
\J_{\eps,\delta} = \bigcup_{\eps' \ge \eps, \delta'\ge \delta} \J_{\eps',\delta'}.
\end{equation*}

\subsection{States}
\label{states}
 Let $\eps, \delta >0$. We define the set of admissible states as
\begin{align*}
\Q := \Bigg\{&(\ye,\yp) \in W^{1,\qe}(\yp(\Om);\RR^d) \times W^{1,\qp}(\Om;\RR^d): \\
&\yp(\Om) \in \J_{\eps,\delta}, \quad
\int_{\Om} \yp \,\d x = 0, \quad \det \nabla \yp = 1 \text{ a.e. in }\Om, \quad \mathcal{L}^d(\Om) \le \mathcal{L}^d(\yp(\Om)) \Bigg\}.
\end{align*}
The state space $\Q$ is equipped with the weak topology of $W^{1,\qe}_{\rm loc}(\yp(\Om);\RR^d) \times W^{1,\qp}(\Om;\RR^d)$. More precisely, we  write that {\it$(\ye^n,\yp^n)_{n \in \NN} \subset \Q$ converges to $(\ye,\yp)$ in $\Q$}, if 
\begin{align*}
&\yp^n \wto \yp \text{ in } W^{1,\qp}(\Om;\RR^d), \\
&\ye^n \wto \ye \text{ in } W^{1,\qe}(K;\RR^d) \text{ for every }K \subset\subset \yp(\Om).
\end{align*}
%More precisely, we say that $(\ye^n,\yp^n)_{n \in \NN} \subset \Q$ converges to $(\ye,\yp) \in \Q$, denoted $$(\ye^n, \yp^n) \qto (\ye,\yp),$$ if $\yp^n \wto \yp$ in $W^{1,\qp}(\Om;\RR^d)$, $\nabla \ye^n \circ \yp^n \wto \nabla \ye \circ \yp$ in $L^\qe(\Om;\RR^{d\times d})$ and $y^n := \ye^n \circ \yp^n \wto \ye \circ \yp =: y$ in $W^{1,q}(\Om;\RR^d)$. The state space $\Q$ is (sequentially) closed under this convergence, see Section \ref{closedness}.
Note that, since $W^{1,\qp}(\Om;\RR^d) \subset \subset C^0(\overline{\Om};\RR^d)$, for every $K \subset\subset \yp(\Om)$ there exists $n_K \in \NN$ such that $K \subset \yp^n(\Om)$ for all $n\ge n_K$. 
 In Section \ref{closedness} below, we prove (sequential) closedness
 of $\Q$ under this convergence.  The constraint $\yp(\Omega) \in
 {\mathcal J}_{\eps,\delta}$ is global in nature and is expected to be not restrictive in most
 practical cases. 

\subsection{Energy}\label{energy}
The stored energy corresponding to the state $(\ye,\yp) \in \Q$
consists of  three  parts: an elastic energy, which is defined on the
intermediate configuration $\yp(\Omega)$ and depends on the elastic strain $\nabla \ye$,  a kinematic hardening energy, depending solely on the plastic strain $\nabla \yp$, and a soft elastic boundary condition defined on the Dirichlet boundary $\Gamma_D$. More precisely, the stored energy of the system reads
\begin{equation*}
\W(\ye,\yp) = \int_{\yp(\Om)} \We(\nabla \ye(\xi)) \,\d \xi + \int_\Om \Wp(\nabla \yp(x)) \,\d x + \int_{\Gamma_D} |\ye(\yp(x)) - x| \, \d \mathcal H^{d-1}(x).
\end{equation*}
%The elastic boundary conditions and the penalization for negative determinant of $\nabla \ye$ are implemented by fixing some small parameter $\lambda > 0$ and adding
%\begin{equation}
%\F(\ye,\yp) := \frac{1}{\lambda} \int_{\yp(\Om)} (\det \nabla \ye)_{-} \, \d \mathcal L^d + \frac{1}{2\lambda} \int_{\Gamma_D} |\ye \circ \yp -\text{id}|^2 \, \d \mathcal H^{d-1}
%\end{equation} 
%to the energy. Here   $\phi_{-}=\max\{0,-\phi\}$  denotes the negative part of a scalar $\phi$.
The system is driven by a time-dependent body force $f: [0,T] \times \Om \to \RR^d$ and a boundary traction $g: [0,T] \times \Gamma_N \to \RR^d$ (provided $\Gamma_N \neq \varnothing$) which result in an  external  loading $\ell$, and loading energy defined as
\begin{equation*}
\langle \ell(t), y\rangle = \int_\Om f(t,x) \cdot y(x) \, \d x + \int_{\Gamma_N} g(t,x) \cdot y(x) \, \d \mathcal H^{d-1}(x).
\end{equation*}
The total energy of the system is then given by
\begin{equation*}
\E(t,\ye,\yp) = \W(\ye,\yp) - \langle \ell(t), \ye \circ \yp \rangle.
\end{equation*}

We assume the elastic energy to have {\it$\qe$-growth} and the plastic energy density to be {\it coercive}, i.e.

\begin{subequations}
	\begin{align}
	\label{growth:elastic}
	c|\Fe|^\qe - \frac{1}{c} &\le \We(\Fe) \le \frac{1}{c}(1+|\Fe|^\qe),\\
	\label{coercive:plastic}
	c|\Fp|^\qp - \frac{1}{c}&\le \Wp(\Fp)
	\end{align}
\end{subequations}

for some constant $c>0$ and every $\Fe\in \RR^{d\times d}, \Fp \in \SL$. 
This is combined with the structural assumption of {\it polyconvexity}, namely
\begin{subequations}
	\label{polyconvexity}
	\begin{align}
	\label{polyconvex:1}
	\We(\Fe) &= \haz W_{\rm e} (\Fe, \cof \Fe, \det \Fe),\\
	\label{polyconvex:2}
	\Wp(\Fp) &= \haz W_{\rm p} (\Fp, \cof \Fp)
	\end{align}
\end{subequations}
where $\haz \We:\RR^{d\times d} \times \RR^{d\times d} \times \RR \to \RR$ and $\haz W_{\rm p}:\RR^{d\times d} \times \RR^{d\times d} \to\RR$ are convex. We remark that the notation corresponds to space dimension $d=3$, as the minors of a matrix are then given by determinant, cofactor, and the matrix itself. For $d=2$ the dependence on the cofactor matrix could be dropped and in dimensions $d>3$ the definition of polyconvexity could be generalized by including further minors.
Although not directly needed for the analysis, we may assume the energy to be frame-indifferent. This corresponds to asking the elastic energy density to satisfy the assumption  $\We(R\Fe) = \We(\Fe)$ for all $R\in \SO, \Fe\in \RR^{d\times d}$.
We further assume
\begin{align*}
f\in W^{1,1}(0,T;L^{(q^*)'}(\Om;\RR^d)), \quad g \in W^{1,1}(0,T;L^{(q^\#)'}(\Gamma_N;\RR^d)),
\end{align*}
where $q^*$ and $q^\#$ denote the {\it Sobolev exponent}
%, namely
%\begin{equation*}
%q^* \begin{cases}
%=dq/(d-q), &\text{if }q<d,\\
%<\infty, &\text{if }q=d,\\
%=\infty, &\text{if }q>d,
%\end{cases}
%\end{equation*}
and the {\it trace exponent}, respectively,
%, namely
%\begin{equation*}
%q^\# \begin{cases}
%=(d-1)q/(d-q), &\text{if }q<d,\\
%<\infty, &\text{if }q=d,\\
%=\infty, &\text{if }q>d.
%\end{cases}
%\end{equation*}
and prime stands for conjugation \cite{Roubook}. Let us remark that  the assumptions on $\qe$ and $\qp$ contribute the following lower bounds on the mentioned exponents: 
\begin{equation*}
q > d-1,\qquad q^* > d(d-1), \qquad q^\# > (d-1)^2.
\end{equation*}
These assumptions ensure  that the loading is absolutely continuous in time, namely,
\begin{equation}\label{load:reg}
\ell \in W^{1,1}(0,T;(W^{1,q}(\Om;\RR^d))^*)
\end{equation}
where $*$ denotes the dual space.

\begin{remark}
	{\it Locking materials} may also be considered. These materials are characterized by a tolerance $M>0$ and internal energy defined as above if $\|\nabla \yp\|_{L^\infty(\Om;\RR^d)} \le M$, and $\W(\ye,\yp) =  \infty$ otherwise.
	This would force the plastic deformations to be (uniformly) Lipschitz continuous.
\end{remark}

%A class of elastic materials which satisfy assumptions \eqref{growth:elastic},\eqref{polyconvex:1} and \eqref{frame} are the so-called {\it Ogden} materials \cite[Sec.~4.9]{Ciarlet}. These are given by  
%\begin{equation}%\label{eqn:Ogden}
%\We(F)=\sum_{i=1}^{n}a_i \,{\rm tr} (F^\top
%F)^{\alpha_i/2}+\sum_{j=1}^{m}b_j \,{\rm tr} ({\rm cof}\,
%F^\top F)^{\beta_j/2}+ \Gamma(\det F)
%\end{equation}
%for $F \in \GLp$ and $\We=\infty$ otherwise, where
%$n,m\geq 1$,  $a_i,b_j> 0$,  $\alpha_i,\,\beta_j \ge 1$. All those energy densities are polyconvex. By letting $\alpha_i\ge \qe$ for some
%$i=1, \dots, n$ also the coercivity bound
%\eqref{growth:elastic} holds.  

\subsection{Dissipation}
\label{Diss}
Following {\sc Mielke}
\cite{Mielke02,Mielke03b,Mielke04b},  we define the (local)  {\it dissipation distance} $\Delta: (\SL
)^2 \to [0,\infty]$ as
\begin{align*}
\Delta(\Fpo, \Fpj) = \inf\bigg\{
&\int_0^1
R(P(t),\dot P(t))\,
\d t \, :  \,P \in C^1([0,1];\SL),
\ P(i)=\Fpi,\ \text{for} \ i=0,1 \bigg\},
\end{align*}
where the \textit{dissipation potential}
\begin{align*}
R: \SL \times \RR^{d\times d} \rightarrow [0,\infty],
\end{align*}
is convex and positively $1$-homogeneous in the rate,  namely,
\begin{equation*}
R(P,\lambda\dot P)=\lambda R(P,\dot P)\;\;\;\;\text{ for all }\lambda \ge 0,
\end{equation*}
and satisfies the {\it plastic indifference} assumption \cite{Mielke03b}
\begin{equation*}
R(PQ,\dot PQ) = R(P,\dot P) \quad \text{ for all }Q \in \SL.
\end{equation*}
These properties imply that there exists a convex,  positively 1-homogeneous function $\widehat R: \RR^{d\times d} \to [0,\infty]$ such that
\begin{equation*}
R(P,\dot P) = \widehat R(\dot P P^{-1}),
\end{equation*}
see \cite{Mielke03b} or \cite[Section 4.2.1.1]{Mielke-Roubicek} and that $\Delta$ satisfies the {\it triangle inequality} 
\begin{equation*}
\Delta(\Fpo,\Fpoo)\le \Delta(\Fpo,\Fpj)+\Delta(\Fpj,\Fpoo),
\end{equation*}
as well as
\begin{equation*}
\Delta(\Fpo,\Fpj) = \Delta(I,\Fpj \Fpo^{-1})
\end{equation*}
for all $\Fpi \in \SL$, $i=0,1,2$, where $I$ is the identity matrix. 

We assume the function $D: \SL \to [0,\infty]$ defined as
\begin{equation*}
D(\Fp) := \Delta(I,\Fp)
\end{equation*}
to be {\it polyconvex}. Namely, we suppose that there exists a convex function $\haz D: \RR^{d\times d} \times \RR^{d\times d} \to [0,\infty]$ such that
\begin{equation}
\label{polyconvexity:D}
D(\Fp) = \haz D(\Fp, \cof \Fp).
\end{equation}
We refer to \cite[Sec. 4]{Mielke-Mueller} for a discussion about
such polyconvex dissipation potentials. Let us however mention that
this is a delicate point, for a complete characterization of
polyconvex functions $D$ is presently available in the case of 2d
isotropic hardening only.  

Eventually, we define the (global) {\it dissipation distance} between plastic strain states $\Fpo, \Fpj:\Om \to \SL$ as
\begin{equation*}
\D(\Fpo,\Fpj) = \int_\Om D(\Fpj(x)(\Fpo(x))^{-1}) \, \d x
\end{equation*}

and the {\it total dissipation} of a plastic evolution $\yp:[0,T] \to \Q$ from $s$ to $t$ as
\begin{equation*}
\Diss(\nabla \yp ;s,t) = \sup \left\{ \sum_{j=1}^{N} \D(\nabla \yp(t_{i-1}),\nabla \yp(t_i)) : s=t_0 < \dots < t_N =t,\ N \in \NN \right\}.
\end{equation*}

\subsection{Main results}\label{sec:statement} 

 Let  a partition $\Pi=\{0=t_0 < t_1 < \dots < t_{N}=T\}, \ N
\in \NN$ and an initial condition $(\yeo,\ypo) \in \Q$  be given
and let     $(\yei,\ypi) \in \Q$, $i=1,\dots,N$  solve  the incremental minimization problem 
\begin{equation}
\label{incremental-minimization0}
(\yei,\ypi) \in \underset{(\ye,\yp) \in \Q}{\argmin} \Big( \E(t_i,\ye,\yp) + \D(\nabla \ypii,\nabla \yp) \Big).
\end{equation}
 Define  the right-continuous, piecewise-constant 
interpolant 
\begin{align}
\label{approx:def}
&(\ye,\yp)(t) = (\yeii,\ypii) \quad \text{for }t \in [t_{i-1},t_i),
\quad  \ i=1,\dots,N, \nonumber\\
&(\ye,\yp)(T) = (y_{\mathrm{e}N},y_{\mathrm{p}N})
\end{align}
and set
\begin{equation*}
y(t) = \ye(t) \circ \yp(t).
\end{equation*}
We  refer to any such interpolation  $(\ye,\yp): [0,T] \to
\Q$  as   to an  {\it incremental solution}. This solution depends on the choice of minimizers in \eqref{incremental-minimization0} and on the partition $\Pi$.
The following definition is inspired by \cite[Def.~2.12]{DalMaso-Lazzaroni-2010}.

\begin{definition}[Incrementally approximable solutions]\label{def_incr_approx}
We call $(\ye, \yp): [0,T] \to \Q$ an {\it incrementally approximable
quasistatic evolution} if the following conditions are satisfied:
 There  a sequence of partitions $(\Pi_n)_{n \in \NN}$ with fineness
$\max_{i=1,\dots, N(n)} (t^n_i - t^n_{i-1})$ tending to 0 as $n$ goes
to $\infty$  and a corresponding sequence of  incremental
solutions $(\ye^n,\yp^n)_{n\in\NN}\subset\Q$, such that, along not
relabeled subsequences,  
\begin{subequations}
	\label{Helly0}
	\begin{align} 
	&\yp^{n}(t) \wto \yp(t) \text{ in } W^{1,\qp}(\Om), \\
	&\Diss(\nabla \yp^{n};0,t) \to \delta(t), \label{Helly1}\\
	&\Diss(\nabla \yp;s,t) \le \delta(t) - \delta(s) \label{Helly2}
	\end{align}
\end{subequations}
 for some  nondecreasing function $\delta: [0,T] \to
[0,\infty)$ and  for every $s,t \in [0,T]$, and for every $t \in [0,T]$ there exists a $t$-dependent subsequence $n^t_k$ and $\ye(t)$ such that $(\ye^{n_k^t}(t),\yp^{n_k^t}(t))$ converges to $(\ye(t), \yp(t))$ in $\Q$.
Moreover, for all $t \in [0,T]$,
\begin{equation}
\tag{$\mathrm{S_{discr}}$}
\E(t,\ye^n(t),\yp^n(t)) \le \E(t,\hye, \hyp) + \D(\nabla \yp^n(t), \nabla \hyp) \quad \text{for all } (\hye,\hyp) \in \Q,
\end{equation}
and for every $s,t \in \Pi_n, s \le t$,
\begin{equation}
\tag{$\mathrm{E_{discr}}$}
\E(t,\ye^n(t),\yp^n(t) - \E(s,\ye^n(s),\yp^n(s)) + \Diss(\nabla \yp^{n_k};s,t) \le - \int_{s}^{t} \langle \dot \ell(r),(\ye \circ \yp)(r) \rangle \, \d r.
\end{equation}

\end{definition}

\begin{theorem}[Existence of incrementally approximable solutions]
\label{existence:quasistatic}
Let $\Om \subset \RR^d$ be as in Section \emph{\ref{deformations}}. Let $\qe > d, \ \qp \ge d(d-1)$ and define $\Q$ as in Section \emph{\ref{states}}, $\E$ as in Section \emph{\ref{energy}}, and $\D$ as in Section \emph{\ref{Diss}}.
Assume $\We: \RR^{d \times d} \to \RR, \Wp: \SL \to \RR$ and $D: \SL
\to [0,\infty]$ to be polyconvex, see \eqref{polyconvexity} and
\eqref{polyconvexity:D}. Moreover, let $\We$ satisfy the growth
condition \eqref{growth:elastic} and $\Wp$ satisfy the coercivity
bound \eqref{coercive:plastic}. Further, assume that $\ell$ fulfills regularity
assumption \eqref{load:reg}. Let $(\yeo,\ypo) \in \Q$ be initial data
satisfying the {\it semistability} condition at time $0$, namely
\begin{equation*}
\E(0,\yeo,\ypo) \le \E(0,\hye, \hyp) + \D(\nabla \ypo, \nabla \hyp) \quad \text{for all } (\hye,\hyp) \in \Q.
\end{equation*}
Then, there exists an incrementally approximable quasistatic evolution
$(\ye, \yp): [0,T] \to \Q$  with $(\ye(0),\yp(0)) = (y_{\rm e 0},
y_{\rm p 0})$  satisfying the following properties
\begin{align*}
\tag{$\mathrm{S_{semi}}$}
&\E(t,\ye(t),\yp(t)) \le \E(t,\hye, \yp(t)) \quad \text{for all }
\hye \text{ such that } (\hye, \yp(t)) \in \Q ,\\
&
\tag{\textrm{E}}
\E(t,\ye(t),\yp(t)) + \delta(t) \le \E(0,\ye(0),\yp(0)) - \int_0^t \langle \dot \ell(s),(\ye \circ \yp)(s) \rangle \, \d s
\end{align*}
\end{theorem}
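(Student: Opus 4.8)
The plan is to run the classical time-incremental (minimizing-movements) scheme for rate-independent systems, adapted to the mixed Lagrangian--Eulerian structure of the reformulated energy \eqref{E2}. For every partition $\Pi$ and each $i=1,\dots,N$ the incremental problem \eqref{incremental-minimization0} is first solved by the direct method. Coercivity of the incremental functional follows from \eqref{coercive:plastic} (bounding $\nabla\yp$ in $L^{\qp}$, hence $\yp$ in $W^{1,\qp}$ by Poincar\'e and $\int_\Om\yp=0$), from \eqref{growth:elastic} (bounding $\nabla\ye$ in $L^{\qe}(\yp(\Om))$ and then $\ye$ itself on $\yp(\Om)$ through the trace term of $\W$ and a Poincar\'e inequality on the intermediate configuration, which lies in a fixed ball), and from \eqref{chain:est}--\eqref{load:reg}, which make the loading enter with sublinear powers of $\W$. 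Lower semicontinuity along sequences converging in $\Q$ rests on the weak continuity of the minors under the polyconvexity hypotheses \eqref{polyconvexity} and \eqref{polyconvexity:D}, transferred onto the moving domains $\yp(\Om)$ by the change of variables \eqref{change}, together with the sequential closedness of $\Q$ proved in Section \ref{closedness}; the dissipation $\D(\nabla\ypii,\cdot)$ is lower semicontinuous for the same reason. This produces incremental solutions and, via \eqref{approx:def}, the interpolants $(\ye^n,\yp^n)$, with $(\ye^n,\yp^n)(0)=(\yeo,\ypo)$.

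Next, testing the minimality at step $i$ with the previous state $(\yeii,\ypii)$, using the triangle inequality for $\D$ together with $\D(\nabla\ypii,\nabla\ypii)=0$, yields the discrete stability $(\mathrm{S_{discr}})$; telescoping the energy increments and controlling $\E(t_i,\cdot)-\E(t_{i-1},\cdot)$ by $\int\langle\dot\ell,\cdot\rangle$ through \eqref{load:reg} yields the discrete energy inequality $(\mathrm{E_{discr}})$ as well as uniform bounds on $\Diss(\nabla\yp^n;0,T)$ and on $\sup_t\W(\ye^n(t),\yp^n(t))$. Consequently $\yp^n(t)$ is bounded in $W^{1,\qp}(\Om;\RR^d)$ uniformly in $n$ and $t$, and, for every $K\subset\subset\yp(t)(\Om)$ (so that $K\subset\yp^n(t)(\Om)$ for $n$ large), $\ye^n(t)$ is bounded in $W^{1,\qe}(K;\RR^d)$. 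A generalized Helly selection theorem applied to the nondecreasing maps $t\mapsto\Diss(\nabla\yp^n;0,t)$ then extracts a subsequence, a nondecreasing $\delta$ realizing \eqref{Helly1}, and a limit with $\yp^n(t)\wto\yp(t)$ in $W^{1,\qp}$ for every $t$; lower semicontinuity of $\D$ gives \eqref{Helly2}. For the elastic part only a $t$-dependent subsequence $n^t_k$ can be extracted, by diagonalization along an exhaustion of $\yp(t)(\Om)$ by compact sets and using the Hausdorff convergence \eqref{Hausdorff-conv}; the closedness of $\Q$ ensures that $(\ye^{n^t_k}(t),\yp^{n^t_k}(t))\to(\ye(t),\yp(t))$ in $\Q$ with $(\ye(t),\yp(t))\in\Q$.

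The core step is the passage to semistability. Fix $t$ and a competitor $\hye$ with $(\hye,\yp(t))\in\Q$. Because $\yp(t)(\Om)$ belongs to the uniform subclass of Jones $(\eps,\delta)$-domains, extend $\hye$ to $\RR^d$ by a uniformly bounded extension operator and restrict it to $\yp^n(t)(\Om)$, obtaining competitors $\hye^n$ with $(\hye^n,\yp^n(t))\in\Q$. Using $(\hye^n,\yp^n(t))$ in $(\mathrm{S_{discr}})$ makes the dissipation term vanish and gives $\E(t,\ye^n(t),\yp^n(t))\le\E(t,\hye^n,\yp^n(t))$ along $n^t_k$. Passing to the limit --- lower semicontinuity on the left, and on the right convergence of the bulk elastic energy (by equiintegrability of $\We(\nabla\hye^n)$ on a fixed ball together with convergence in measure of the indicators of $\yp^n(t)(\Om)$, a consequence of \eqref{Hausdorff-conv} and the uniform $(\eps,\delta)$-regularity), of the trace term (since $\yp^n(t)\to\yp(t)$ uniformly on $\overline\Om$ while $\hye^n$ converges locally uniformly by Morrey, using $\qe>d$), and of the loading --- yields $(\mathrm{S_{semi}})$.

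Finally, taking $s=0$ in $(\mathrm{E_{discr}})$ and letting $n=n^t_k\to\infty$: the term $\E(0,\ye^n(0),\yp^n(0))=\E(0,\yeo,\ypo)$ is constant, lower semicontinuity of $\E$ combined with \eqref{Helly1}--\eqref{Helly2} bounds $\E(t,\ye(t),\yp(t))+\delta(t)$ from above, and the loading integral passes to the limit by dominated convergence --- its integrand being bounded by $r\mapsto C\|\dot\ell(r)\|_{(W^{1,q}(\Om;\RR^d))^*}\in L^1(0,T)$ via \eqref{chain:est} and the uniform energy bound, with $(\ye^n\circ\yp^n)(r)\wto(\ye\circ\yp)(r)$ in $W^{1,q}(\Om;\RR^d)$ for a.e.\ $r$ after a further diagonal extraction. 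This gives $(\mathrm{E})$, while $(\ye(0),\yp(0))=(\yeo,\ypo)$ holds by construction and semistability at time $0$ follows from the hypothesis by taking $\hyp=\ypo$. The main obstacle is exactly the recovery-sequence construction above: an arbitrary admissible elastic competitor on the limiting intermediate configuration must be converted into admissible competitors on the approximating configurations with vanishing energy defect, and this is what the restriction to a uniform class of $(\eps,\delta)$-domains buys --- uniformly bounded Sobolev extension operators that behave well under Hausdorff convergence of the underlying domains.
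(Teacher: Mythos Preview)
Your strategy coincides with the paper's: direct method for the incremental problems, discrete stability and energy inequality from minimality, Helly selection for $\yp$, $t$-dependent subsequences for $\ye$, and a recovery-sequence construction via the Jones extension for semistability. Two points, however, need correction.

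For coercivity you invoke a Poincar\'e inequality \emph{on the intermediate configuration} $\yp(\Om)$. The paper does not do this: it applies a generalized Poincar\'e inequality (its Lemma~\ref{Poincare}) on the \emph{fixed} reference domain $\Om$ to the composed map $y=\ye\circ\yp\in W^{1,q}(\Om;\RR^d)$, using the soft Dirichlet term $\|y-\id\|_{L^1(\Gamma_D)}$, and then recovers $\|\ye\|_{L^{\qe}(\yp(\Om))}=\|y\|_{L^{\qe}(\Om)}$ via \eqref{change}. Your route would require Poincar\'e constants uniform over the family $\{\yp(\Om)\}$ and a way to read the trace term --- which lives on $\Gamma_D\subset\partial\Om$, not on $\partial(\yp(\Om))$ --- as boundary information on the intermediate domain; neither is immediate.

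The more serious issue is the passage to the limit in the loading integral $\int_0^t\langle\dot\ell(s),y^n(s)\rangle\,\d s$. Your claim that $(\ye^n\circ\yp^n)(r)\wto(\ye\circ\yp)(r)$ in $W^{1,q}(\Om;\RR^d)$ for a.e.\ $r$ ``after a further diagonal extraction'' cannot be made rigorous: the elastic limit $\ye(r)$ is defined only through an $r$-\emph{dependent} subsequence, and one cannot diagonalize over uncountably many times, so dominated convergence is unavailable. The paper handles this via Dunford--Pettis: the integrands $\theta_n(s):=-\langle\dot\ell(s),y^n(s)\rangle$ are equiintegrable, hence along a subsequence $\theta_{n_k}\wto\theta$ weakly in $L^1(0,T)$; for each fixed $t$ one then picks the $t$-dependent subsequence $N_k^t$ realizing both the $\Q$-convergence of the state and $\theta^{N_k^t}(t)\to\theta_{\sup}(t):=\limsup_k\theta^{n_k}(t)$, identifies $\theta_{\sup}(t)=-\langle\dot\ell(t),y(t)\rangle$, and uses the pointwise bound $\theta\le\theta_{\sup}$ a.e.\ (a consequence of weak $L^1$-convergence under a uniform $L^1$ majorant) to obtain $\int_0^t\theta\,\d s\le -\int_0^t\langle\dot\ell(s),y(s)\rangle\,\d s$. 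This yields only an inequality, but that is all \textup{(E)} asks for.
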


Incrementally approximable quasistatic evolutions fulfill the {\it semistability} condition $(\mathrm{S_{semi}})$ with respect to elastic deformations, as well as an {\it energy inequality} \textrm{(E)}. These properties are close to the solution concept discussed in \cite{RoegerSchweizer17} in the context of
viscoplasticity but considerably weaker than the classical notion of {\it energetic solutions} \cite{Mielke-Roubicek}. There, the trajectory is
required to be stable with respect to {\it both} plastic and elastic
deformation and energy equality holds. We refer to \cite[Chapter
3]{Mielke-Roubicek} for a detailed discussion about different solution
concepts for rate-independent systems.

 Despite the weakness of the solution concept, the fact that incrementally approximable
solutions are indeed limits of incremental solutions guarantees that
plasticity actually occurs, whenever necessary. In particular, the
purely elastic evolution $\nabla\yp(t)=I$, which fulfills
$(\mathrm{S_{semi}})$-\textrm{(E)} for compatible initial data, may fail to be incrementally
approximable for loadings exceeding the plastic-activation threshold. 

In
order to give an elementary example of this fact, we present a simplified
argument, by reducing to one space dimension and to a single
material point. In this frame, by choosing energy densities to be 
quadratic and setting all constants to $1$, the incremental problem
\eqref{incremental-minimization0} can be recast in terms of the
deformation strain $f\in \RR$ and the plastic strain $p > 0$ (we neglect
the isochoric constraint, as necessary in one space dimension) as
$$(f_i , p_i) \in \underset{f\in \RR,\, p >0}{\text{argmin}} \left( \frac12|f p^{-1}|^2 +\frac12p^2 -
\ell(t_i) f + |\log p - \log p_{i-1}|\right)\quad \text{for} \ i=1,\dots,N$$
where the initial values $(f_0,p_0)$ with $p_0=1$  and the {loading}
$\ell(t_i) = \lambda t_i $ for $\lambda >0$ are
given. One can prove that $p_i=1$ as long as $|\lambda t_i |\leq 1$. 
In particular, all incrementally approximable
solutions will be such that $p(t)\not =1$ for $|\ell(t)|>1$. In this
case, the purely elastic solution $p(t)=1$ is not incrementally
approximable.

% Nevertheless, we are able to prove the following conditional stability in both arguments for competitors satisfying an extensibility condition.

% \begin{prop}[Conditional stability]
% \label{cond_stab}
% Let $(\ye, \yp): [0,T] \to \Q$ be an incrementally approximable quasistatic evolution and additionally assume that $$\Wp(F_{\rm p}) \le C(1+|F_{\rm p}|^\qp)$$ for some $C>0$ and any $F_{\rm p} \in \SL$.
% Then for every $(\hye, \hyp) \in \Q$ which allows for an extension $v$ of $\hyp \circ \yp^{-1}$ to a neighborhood $\omega$ of $\yp(\Om)$ satisfying $v \in W^{1,\infty}(\omega;\RR^d)$ with $\det \nabla v =1$ a.e. in $\omega$, we have
% \begin{equation}
% \tag{\textrm{S}}
% \E(t,\ye(t),\yp(t)) \le \E(t,\hye, \hyp) + \D(\nabla \yp(t), \nabla \hyp) \quad \text{for all } (\hye,\hyp) \in \Q.
% \end{equation}
% \end{prop}

\section{Proofs}\label{proofs}
The proof of Theorem \ref{existence:quasistatic}  is 
detailed along the whole section and consist of several parts. In
Subsection \ref{closedness} we discuss the closure of the state space
$\Q$. In Subsection \ref{existence:incremental} we show the existence
of incrementally approximable solutions. In Subsections
\ref{energy-inequality}-\ref{semistability} the validity of energy
inequality \textrm{(E)} and semistability $(\mathrm{S_{semi}})$ is
 checked   by passing the corresponding approximate properties
of the incremental scheme to the limit. %, and in Subsection \ref{sec:cond_stab} we proof the conditional stability result.

\subsection{Closedness of the state space}
\label{closedness}
Consider a sequence $(\ye^n,\yp^n) \in \Q$ converging to $(\ye,\yp)$
in $\Q$ in the sense of Subsection \ref{states}. Then
$\yp^n \wto \yp$ in $W^{1,\qp}(\Om;\RR^d)$  implies the weak
convergence of $\det \nabla \yp^n$ to $\det \nabla \yp$ in
$L^{\qp/d}(\Om;\RR)$. Hence $\det \nabla \yp = 1$ almost
everywhere and $\mathcal{L}^d(\Om) \le \mathcal{L}^d(\yp(\Om))$ by
\cite{ciarlet-necas}. Moreover, strong convergence in $L^\qp(\Om)$
implies $\int_{\Om} \yp (x)\ \d x = 0$. 
%Clearly, $\ye \circ \yp \in L^\qe(\Om;\RR^d)$ and $\nabla \ye \circ \yp \in L^\qe(\Om;\RR^{d\times d})$, which implies that $\ye \in W^{1,\qe}(\yp(\Om);\RR^d)$ by the change of variables formula \eqref{change}.
The fact that $\yp(\Om) \in \J_{\eps,\delta}$ follows from Lemmas
\ref{closedness:A}-\ref{closedness:B} below. In Lemma
\ref{closedness:A} we show the set convergence \eqref{Hausdorff-conv} by exploiting the fact that the plastic deformations are homeomorphisms, see Subsection \ref{deformations}. In Lemma \ref{closedness:B} we show that $\J_{\eps,\delta}$ is closed under this convergence. 

\begin{lemma}[Hausdorff convergence of intermediate configurations]
	\label{closedness:A}
	Let $\Om \subset \RR^d$ be open and bounded. Let $y, y_n \in C^0(\overline{\Om};\RR^d)$, $n \in \NN,$  be  such that $y_n$ converges to $y$ uniformly on $\overline{\Om}$ and  $y|_{\Om}:\Om \to y(\Om), y_n|_{\Om}:\Om \to y_n(\Om)$  are homeomorphisms for every $n \in \NN$. Then
	\begin{equation*}
	d_{\rm H}(\overline{y_n(\Om)},\overline{y(\Om)}) \to 0, \quad d_{\rm H}(\partial y_n(\Om),\partial y(\Om)) \to 0
	\end{equation*}
	as  $n$ tends to $\infty$.
\end{lemma}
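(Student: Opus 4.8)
The plan is to deduce both convergences directly from the uniform estimate $\eta_n := \|y_n - y\|_{C^0(\overline\Om;\RR^d)} \to 0$, after rewriting the sets $\overline{y_n(\Om)},\overline{y(\Om)}$ and $\partial y_n(\Om),\partial y(\Om)$ as images of $\overline\Om$, respectively $\partial\Om$, under $y_n$ and $y$.

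First I would record a general remark: for any $w \in C^0(\overline\Om;\RR^d)$ one has $\overline{w(\Om)} = w(\overline\Om)$, because $w(\overline\Om)$ is compact, hence closed, and contains $w(\Om)$, while every point of $\overline\Om$ is a limit of points of $\Om$. Applying this to $w=y$ and $w=y_n$, for every $x \in \overline\Om$ I can estimate $\dist\bigl(y_n(x),\overline{y(\Om)}\bigr) = \dist\bigl(y_n(x),y(\overline\Om)\bigr) \le |y_n(x)-y(x)| \le \eta_n$, and symmetrically $\dist\bigl(y(x),\overline{y_n(\Om)}\bigr) \le \eta_n$. Taking suprema over $x \in \overline\Om$ and invoking the $\max$-characterization of $d_{\rm H}$ yields $d_{\rm H}\bigl(\overline{y_n(\Om)},\overline{y(\Om)}\bigr) \le \eta_n \to 0$, which is the first assertion.

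The second assertion relies on the identities $\partial y_n(\Om) = y_n(\partial\Om)$ and $\partial y(\Om) = y(\partial\Om)$, and this is the step that genuinely uses the homeomorphism hypothesis. By invariance of domain, the continuous injection $y_n|_\Om$ of the open set $\Om \subset \RR^d$ has open image; hence, using the first step and $y_n(\overline\Om) = y_n(\Om)\cup y_n(\partial\Om)$,
\[
\partial y_n(\Om) = \overline{y_n(\Om)}\setminus y_n(\Om) = y_n(\overline\Om)\setminus y_n(\Om) = y_n(\partial\Om)\setminus y_n(\Om),
\]
so it only remains to show $y_n(\partial\Om)\cap y_n(\Om) = \emptyset$. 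I would argue by contradiction: if $y_n(x)=y_n(x_0)$ with $x\in\partial\Om$ and $x_0\in\Om$, choose a ball $B\subset\subset\Om$ centered at $x_0$ with $x\notin\overline B$; by invariance of domain $y_n(B)$ is an open neighborhood of $y_n(x_0)=y_n(x)$, so continuity of $y_n$ at $x$ produces $x'\in\Om$ so close to $x$ that $y_n(x')\in y_n(B)$ and $x'\notin\overline B$, whence $y_n(x')=y_n(b)$ for some $b\in B$ with $b\ne x'$, contradicting injectivity of $y_n$ on $\Om$. The same argument applies to $y$. Granting the identities, exactly as before, for each $x\in\partial\Om$ we get $\dist(y_n(x),y(\partial\Om)) \le |y_n(x)-y(x)| \le \eta_n$ and symmetrically, so $d_{\rm H}(\partial y_n(\Om),\partial y(\Om)) = d_{\rm H}(y_n(\partial\Om),y(\partial\Om)) \le \eta_n \to 0$.

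I expect the only real obstacle to be the set identity $\partial y(\Om)=y(\partial\Om)$, that is, the fact that the continuous extension to $\overline\Om$ of an open homeomorphism cannot fold a boundary point onto an image of an interior point; once that topological point is secured via invariance of domain together with injectivity on $\Om$, both Hausdorff convergences follow at once and carry the same bound $\eta_n$.
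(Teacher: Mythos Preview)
Your proof is correct and follows essentially the same strategy as the paper: both arguments reduce the two Hausdorff convergences to the set identities $\overline{y(\Om)}=y(\overline{\Om})$ and $\partial y(\Om)=y(\partial\Om)$, then read off the conclusion from uniform convergence. Your treatment of the delicate step $y(\partial\Om)\cap y(\Om)=\emptyset$ is in fact more explicit than the paper's, invoking invariance of domain and a short contradiction argument, whereas the paper leans directly on the homeomorphism property of $y|_{\Om}$.
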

\begin{proof}[ Proof of Lemma \ref{closedness:A}]
	By uniform convergence, we get that 
	$d_{\rm H}(y_n(\overline{\Om}),y(\overline{\Om}))$ and \\$d_{\rm H}(y_n(\partial \Om),y(\partial \Om))$ tend to 0. One is left to show that $y(\overline{\Om}) = \overline{y(\Om)}$ and $y(\partial \Om) = \partial y(\Om)$. 

	Ad $y(\overline{\Om}) = \overline{y(\Om)}$: we observe that,
        by continuity of $y$ and compactness of $\overline{\Om}$,
        the set
        $y(\overline{\Om})$ is closed. Therefore $\overline{y(\Om)}
        \subset y(\overline{\Om})$. In order to check the
        opposite inclusion, let $z
        \in y(\overline{\Om})$ and choose $x \in y^{-1}(z) \subset
        \overline{\Om}$ and $x_n \in \Om$ converging to $x$. Then,
        $y(x_n) \in y(\Om)$ and by the continuity of $y$ up to the
        boundary, $y(x_n)$ converges to $y(x)$, showing that $z = y(x)
        \in \overline{y(\Om)}$. 

	Ad $y(\partial \Om) = \partial y(\Om)$: we use the fact that,
        as $y$ is a homeomorphism, the set $y(\Om)$ is open. Thus, $\partial y(\Om) = y(\overline{\Om})\setminus y(\Om)$. We claim that $y(\overline{\Om})\setminus y(\Om) = y(\partial \Om)$. Indeed, if $z \in y(\Om)$, then there exists an open neighborhood $U$ of $z$ such that $U \subset y(\Om)$. Since $y$ is a homeomorphism, $V = y^{-1}(U)$ is an open neighborhood of $y^{-1}(z)$ such that $V \subset \Om$, implying that $y^{-1}(z) \notin \partial \Om$, i.e. $z \notin y(\partial \Om)$. This shows $y(\overline{\Om})\setminus y(\Om) \supset y(\partial \Om)$. On the other hand, $y(\overline{\Om})\setminus y(\Om) \subset y(\overline{\Om}\setminus \Om) = y(\partial \Om)$. This concludes the proof.
\end{proof}

\begin{lemma}[Closedness of $\J_{\eps,\delta}$ under Hausdorff convergence]
	\label{closedness:B}
	Let $\omega_n \in \J_{\eps,\delta}$ converge to $\omega$ in the sense that
	\begin{equation}
	\label{hausdorff_conv_1}
	d_{\rm H}(\overline{\omega},\overline{\omega}_n) \to 0
	\end{equation}
	and 
	\begin{equation}
	\label{hausdorff_conv_2}
	d_{\rm H}(\partial \omega,\partial \omega_n) \to 0
	\end{equation}
	as  $n$ tends to $\infty$. Then $\omega \in \J_{\eps,\delta}$.
\end{lemma}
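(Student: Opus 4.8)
The plan is to verify directly the curve‑connecting condition in the definition of $\J_{\eps,\delta}$ for the limit set $\omega$, by transplanting connecting curves available in the $\omega_n$ and passing to a uniform limit. Boundedness of $\omega$ is immediate from \eqref{hausdorff_conv_1} (for large $n$, $\overline\omega$ lies in a bounded fattening of the bounded set $\overline{\omega_n}$), and we regard $\omega$ as open, as it is in our application where $\omega=\yp(\Om)$; openness is used below to force the limit curve into $\omega$ rather than merely into $\overline\omega$. Set $\nu_n:=\max\{d_{\rm H}(\overline\omega,\overline{\omega_n}),\,d_{\rm H}(\partial\omega,\partial\omega_n)\}\to0$ and fix $x,y\in\omega$ with $|x-y|<\delta$; if $x=y$ the constant curve works, so assume $x\neq y$. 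No diagonal argument is needed, since only one curve per admissible pair is required.

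First I would build the approximating curves. Since $\overline\omega\subset B_{\nu_n}(\overline{\omega_n})$ and $\omega_n$ is dense in $\overline{\omega_n}$, pick $x_n,y_n\in\omega_n$ with $x_n\to x$ and $y_n\to y$; then $|x_n-y_n|\to|x-y|<\delta$, so $|x_n-y_n|<\delta$ and $x_n\neq y_n$ for $n$ large. The definition of $\J_{\eps,\delta}$ applied to $\omega_n$ yields a Lipschitz curve $\gamma_n\in W^{1,\infty}([0,1];\omega_n)$ with $\gamma_n(0)=x_n$, $\gamma_n(1)=y_n$ satisfying \eqref{Jones:cond1}--\eqref{Jones:cond2} relative to $\omega_n,x_n,y_n$. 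Reparametrizing $\gamma_n$ by constant speed preserves the endpoints, the bound $\ell(\gamma_n)\le\tfrac{1}{\eps}|x_n-y_n|$, and the validity of \eqref{Jones:cond2} (a condition on every point of the image, hence invariant under change of parameter), and makes $\gamma_n$ an $\ell(\gamma_n)$‑Lipschitz map. Since the $\ell(\gamma_n)$ are uniformly bounded and the $\gamma_n$ all take values in a fixed bounded set, the Arzel\`a--Ascoli theorem provides a (not relabeled) subsequence with $\gamma_n\to\gamma$ uniformly on $[0,1]$, $\gamma$ Lipschitz, $\gamma(0)=x$, $\gamma(1)=y$.

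It remains to check that $\gamma$ is admissible for $\omega$. Lower semicontinuity of length under uniform convergence gives $\ell(\gamma)\le\liminf_n\ell(\gamma_n)\le\liminf_n\tfrac{1}{\eps}|x_n-y_n|=\tfrac{1}{\eps}|x-y|$, which is \eqref{Jones:cond1}. Fix $t\in[0,1]$. From $\overline{\omega_n}\subset B_{\nu_n}(\overline\omega)$ and $\gamma_n(t)\in\omega_n$ we obtain $\gamma(t)=\lim_n\gamma_n(t)\in\overline\omega$. Picking $w^\ast\in\partial\omega$ with $|\gamma(t)-w^\ast|=\dist(\gamma(t),\partial\omega)$ ($\partial\omega$ being a nonempty closed set) and, via $\partial\omega\subset B_{\nu_n}(\partial\omega_n)$, some $w_n\in\partial\omega_n$ with $|w^\ast-w_n|<\nu_n$, we get $\dist(\gamma_n(t),\partial\omega_n)\le|\gamma_n(t)-w_n|\le|\gamma_n(t)-\gamma(t)|+\dist(\gamma(t),\partial\omega)+\nu_n$, hence
\[
\dist(\gamma(t),\partial\omega)\ge\eps\,\frac{|x_n-\gamma_n(t)|\,|\gamma_n(t)-y_n|}{|x_n-y_n|}-|\gamma_n(t)-\gamma(t)|-\nu_n,
\]
and letting $n\to\infty$ yields $\dist(\gamma(t),\partial\omega)\ge\eps\,|x-\gamma(t)|\,|\gamma(t)-y|/|x-y|$, which is \eqref{Jones:cond2} for $\omega$. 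Finally, if this lower bound is strictly positive then $\gamma(t)\notin\partial\omega$, so $\gamma(t)\in\overline\omega\setminus\partial\omega=\omega$ by openness; if it vanishes then $\gamma(t)\in\{x,y\}\subset\omega$. In either case $\gamma([0,1])\subset\omega$, so $\gamma\in W^{1,\infty}([0,1];\omega)$ is the sought curve and $\omega\in\J_{\eps,\delta}$.

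The step I expect to be the main obstacle is the last one: preventing the limit curve from sliding onto $\partial\omega$. This requires both halves of the Hausdorff convergence simultaneously — convergence of closures to locate $\gamma(t)$ in $\overline\omega$, and convergence of boundaries to transport the quantitative estimate \eqref{Jones:cond2} to the limit — together with the openness of $\omega$, which (as elementary examples show) does not follow from the metric convergence alone. A secondary technical point is the constant‑speed reparametrization, which must be set up so that the pointwise condition \eqref{Jones:cond2} is preserved and the uniform Lipschitz bound needed for Arzel\`a--Ascoli is retained.
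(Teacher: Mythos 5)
Your proof is correct and takes essentially the same approach as the paper's: transplant the Jones curves from $\omega_n$, reparametrize to obtain a uniform Lipschitz bound, pass to a uniform limit via Arzel\`a--Ascoli, and transfer conditions \eqref{Jones:cond1}--\eqref{Jones:cond2} using lower semicontinuity of length together with the Hausdorff convergence of both closures and boundaries. The only cosmetic difference is that you reparametrize to constant speed on $[0,1]$ whereas the paper uses an arc-length parametrization on $[0,\delta/\eps]$ with constant extension at the endpoint; both yield the same compactness and the same conclusion.
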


\begin{proof}[ Proof of Lemma \ref{closedness:B}]
	Let $x,y \in \omega$ with $|x-y| < \delta$. By convergence \eqref{hausdorff_conv_1}, for every $\nu >0$ there exists $N_\nu \in \NN$ such that for all $n \ge N_\nu$ we have $\omega \subset \overline{\omega} \subset B_\nu(\overline{\omega}_n) = B_\nu({\omega}_n)$. Therefore, we can choose a (not relabeled) subsequence and $x_n,y_n \in \omega_n$ such that $x_n$ and $y_n$ converge to $x$ and $y$, respectively, and $|x_n -y_n| < \delta$ for every $n\in \NN$. We now use the assumption that $\omega_n \in \J_{\eps,\delta}$ and find $\gamma_n \in W^{1,\infty}([0,1];\omega_n)$ such that $\gamma_n(0)=x_n, \gamma_n(1)=y_n$,
	\begin{equation}
	\label{cond:1}
	\ell(\gamma_n) \le \frac{1}{\eps} |x_n-y_n| < \frac{\delta}{\eps},
	\end{equation} 
	and 
	\begin{equation}
	\label{cond:2}
	\text{dist}(\gamma_n(t),\partial \omega_n) \ge \eps \frac{|x_n-\gamma_n(t)||\gamma_n(t)-y_n|}{|x_n-y_n|} \quad \forall t \in [0,1].
	\end{equation}
	Set $L:=\delta/\eps$. From condition \eqref{cond:1} we see that $\sup_{n \in \NN} \ell(\gamma_n) \le L$. Now consider the parametrizations by arclength with constant extension at the endpoint denoted by $\tilde \gamma_n: [0,L] \to \omega_n$. By definition, these satisfy 
	\begin{equation*}
	|\dot{\tilde{\gamma}}_n(s)|=\begin{cases}
	1, &\text{if }s \in [0,\ell(\gamma_n)],\\
	0, &\text{if }s \in (\ell(\gamma_n), L].
	\end{cases}
	\end{equation*}
	We use the Arzel\`a-Ascoli Theorem to extract a (not relabeled) subsequence and find $\tilde \gamma \in W^{1,\infty}([0,L];\overline{\omega})$ such that
	\begin{align}
	&\dot{\tilde{\gamma}}_n \wstar \dot{\tilde{\gamma}} \quad \text{in }L^\infty(0,L),\label{weakstar}\\
	&\tilde \gamma_n \to \tilde \gamma \quad \text{in }C^0([0,L]). \label{strong}
	\end{align}
Define now $\gamma(t):=\tilde{\gamma}(t/L)$. Then $\gamma \in W^{1,\infty}([0,1];\overline{\omega})$ and by weak lower-semicontinuity we get
	\begin{equation*}
	\ell(\gamma) = \int_0^L |\dot{\tilde{\gamma}}(s)| \, \d s \overset{\eqref{weakstar}}{\le} \liminf_{ n \to \infty} \int_0^L |\dot{\tilde{\gamma}}_{ n}(s)| \, \d s = \liminf_{{ n} \to \infty} \ell(\gamma_{ n}) \overset{\eqref{cond:1}}{\le}  \frac{1}{\eps} \lim_{{ n} \to \infty} |x_{ n}-y_{ n}| = \frac{1}{\eps} |x-y|.
	\end{equation*}
	Notice that, as soon as we prove condition
        \eqref{Jones:cond2},  $\gamma([0,1]) \subset \omega$
        follows.
	In order to show \eqref{Jones:cond2}, we fix $s \in [0,L]$. By compactness of the boundary $\partial \omega$ we can choose $z \in \partial \omega$ such that
	\begin{equation}
	\label{dist:quantify}
	\text{dist}(\tilde \gamma(s),\partial \omega) = |\tilde \gamma(s)-z|.
	\end{equation} 
	We  further  choose $z_n \in \partial \omega_n$ such that
	\begin{equation*}
	|z_n-z| \le d_{\rm H}(\partial \omega,\partial \omega_n)
	\end{equation*}
	for every $n \in \NN$. Then, by the triangle inequality
	\begin{align*}
	|\tilde \gamma(s)-z| &\ge |\tilde \gamma_n(s)-z_n| -|\tilde \gamma(s)-\tilde \gamma_n(s)| - |z_n-z| \\
	&\ge \text{dist}(\tilde \gamma_n(s),\partial \omega_n) -\|\tilde \gamma-\tilde \gamma_n\|_{ C^0([0,1])} - d_{\rm H}(\partial \omega,\partial \omega_n).
	\end{align*}
	Using assumption \eqref{hausdorff_conv_2}, condition \eqref{cond:2}, and convergence \eqref{strong},  we  deduce  that
	\begin{equation*}
	|\tilde \gamma(s)-z| \ge \eps \frac{|x_{ n}-\tilde \gamma_{ n}(s)||\tilde \gamma_{ n}(s)-y_{ n}|}{|x_{ n}-y_{ n}|}.
	\end{equation*} 
	 Passage to the limit on the right-hand side  concludes the proof of \eqref{Jones:cond2}.
\end{proof}

\subsection{Existence of incremental solutions}
\label{existence:incremental}
In the following $C>0$ denotes a positive real constant which may
change from line to line, whereas $c>0$ denotes the constant used in
assumptions \eqref{growth:elastic}-\eqref{coercive:plastic}.  For the purpose of readability, we abbreviate  $\|f\|_{L^p(\Om;\RR^d)}$ by $\|f\|_{L^p(\Om)}$ and $\|g\|_{L^p(\Gamma_D;\RR^d)}$ by $\|g\|_{L^p(\Gamma_D)}$.

Let $\Pi = \{0=t_0<t_1<\dots<t_N=T\}$ be a partition of $[0,T]$.
Given $i \in \{1,\dots,N\}$ and $(\yeii,\ypii) \in \Q$ we aim at proving that minimizers of the {\it incremental problem}
\begin{equation}
\label{incremental-minimization}
(\yei,\ypi) \in \underset{(\ye,\yp) \in \Q}{\argmin} \Big( \E(t_i,\ye,\yp) + \D(\nabla \ypii,\nabla \yp) \Big)
\end{equation}
exist.

We follow the Direct Method of the Calculus of Variations: Let $(\ye^n,\yp^n)_{n \in \NN} \subset \Q$ be an infimizing sequence for \eqref{incremental-minimization}. Here, we use that $\Q$ is non-empty, since $(T^{-1}, T) \in \Q$, where $T(x) = x-\bar x$ is a translation and $\bar x$ is the barycenter of $\Om$. As $\D$ is nonnegative, we can assume without loss of generality that $\E(t_i,\ye^n,\yp^n) \le C$. 
We aim at showing the following compactness result:
\begin{equation}
\label{compactness}
\E(t_i,\ye^n,\yp^n) \le C \quad \Longrightarrow \quad (\ye^{n},\yp^{n}) \to (\ye,\yp) \text{ in }\Q
\end{equation}
along a not relabeled subsequence.
Indeed, the energy bound $\E(t_i,\ye^n,\yp^n) \le C$ together with the growth assumption \eqref{growth:elastic} and the coercivity \eqref{coercive:plastic} entails
\begin{equation*}
c\|\nabla \ye^n\|_{L^\qe(\yp^n(\Om))}^\qe + \|\nabla \yp^n\|_{L^\qp(\Om)}^\qp + c\|\ye^n\circ\yp^n - \text{id}\|_{L^1(\Gamma_D)} \le C + \langle \ell(t), \ye^n \circ \yp^n \rangle.
\end{equation*}
By the regularity assumption \eqref{load:reg} and the chain rule
estimate \eqref{chain:est},  we can bound
\begin{align*}
|\langle \ell(t), \ye^n \circ \yp^n \rangle| &\le \|\ell(t)\|_{(W^{1,q}(\Om))^*} \|\ye^n \circ \yp^n \|_{W^{1,q}(\Om)} \\ 
&\le C \|\nabla \ye^n\|_{L^\qe(\yp^n(\Om))} \|\nabla \yp^n\|_{L^\qp(\Om)} \\
&\le \frac{c}{2}\|\nabla \ye^n\|_{L^\qe(\yp^n(\Om))}^\qe + \frac{c}{2} \|\nabla \yp^n\|_{L^\qp(\Om)}^\qp + C
\end{align*}
and conclude that
\begin{equation}
\label{coercive:1}
\|\nabla \ye^n\|_{L^\qe(\yp^n(\Om))}^\qe + \|\nabla \yp^n\|_{L^\qp(\Om)}^\qp + \|\ye^n\circ\yp^n - \text{id}\|_{L^1(\Gamma_D)} \le C.
\end{equation}
Since $\yp^n$ has zero mean, the Poincar\'e-Wirtinger inequality implies that $\yp^n$ is bounded in $W^{1,\qp}(\Om;\RR^d)$. On the other hand  $y^n := \ye^n \circ \yp^n$  is subject to the elastic Dirichlet boundary condition on $\Gamma_D$ and we have the following result.

\begin{lemma}[Generalized Poincar\'e inequality]
	\label{Poincare}
	Let $\Om \subset \RR^d$ be as in Section \emph{\ref{deformations}} and $q \ge 1$. Then, there exists a constant $C_{\text{Poincar\'e}} > 0$ such that 
	\begin{equation*}
	\|y\|_{W^{1,q}(\Om)} \le C_{\text{Poincar\'e}}\Big( \|\nabla y\|_{L^q(\Om)} + \|y - \mathrm{id}\|_{L^1(\Gamma_D)} \Big)
	\end{equation*}
	for every $y \in W^{1,q}(\Om;\RR^d)$.
\end{lemma}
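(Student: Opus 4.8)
The plan is to prove Lemma~\ref{Poincare} by a standard compactness--contradiction argument, the only mild complication being that the boundary term $\|y-\mathrm{id}\|_{L^1(\Gamma_D)}$ is not positively homogeneous in $y$. Write $\Phi(y):=\|\nabla y\|_{L^q(\Om)}+\|y-\mathrm{id}\|_{L^1(\Gamma_D)}$ for brevity. If no constant $C_{\text{Poincar\'e}}$ as claimed existed, there would be a sequence $(y_n)_n\subset W^{1,q}(\Om;\RR^d)$ with $\|y_n\|_{W^{1,q}(\Om)}>n\,\Phi(y_n)$, hence $\Phi(y_n)<a_n/n$ with $a_n:=\|y_n\|_{W^{1,q}(\Om)}>0$ (note $y_n\neq 0$, since $\Phi(0)=\|\mathrm{id}\|_{L^1(\Gamma_D)}>0$). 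I would then split into two cases according to whether $(a_n)_n$ is bounded.

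If $(a_n)_n$ has a bounded subsequence, then along it $\Phi(y_n)\to 0$, so $\nabla y_n\to 0$ in $L^q(\Om)$ and $y_n|_{\Gamma_D}\to\mathrm{id}|_{\Gamma_D}$ in $L^1(\Gamma_D)$. By the Rellich--Kondrachov theorem, $W^{1,q}(\Om;\RR^d)$ embeds compactly into $L^q(\Om;\RR^d)$, so after passing to a further subsequence $y_n\to y$ in $L^q(\Om;\RR^d)$; combined with $\nabla y_n\to 0$ this makes $(y_n)$ a Cauchy sequence in $W^{1,q}(\Om;\RR^d)$, hence $y_n\to y$ strongly with $\nabla y=0$. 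Since $\Om$ is connected, $y$ is a constant vector. Continuity of the trace operator $W^{1,q}(\Om;\RR^d)\to L^1(\partial\Om;\RR^d)$ yields $y_n|_{\Gamma_D}\to y$ in $L^1(\Gamma_D)$, and comparison with $y_n|_{\Gamma_D}\to\mathrm{id}|_{\Gamma_D}$ forces $\mathrm{id}=y$ for $\mathcal H^{d-1}$-a.e.\ point of $\Gamma_D$. This is impossible: a constant coinciding $\mathcal H^{d-1}$-a.e.\ on $\Gamma_D$ with $x\mapsto x$ would force $\mathcal H^{d-1}(\Gamma_D)=0$, against the hypothesis $\mathcal H^{d-1}(\Gamma_D)>0$.

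If instead $a_n\to\infty$, I would rescale: set $z_n:=y_n/a_n$, so that $\|z_n\|_{W^{1,q}(\Om)}=1$, $\|\nabla z_n\|_{L^q(\Om)}=\|\nabla y_n\|_{L^q(\Om)}/a_n<1/n\to 0$, and, since $\|z_n-\mathrm{id}/a_n\|_{L^1(\Gamma_D)}=\|y_n-\mathrm{id}\|_{L^1(\Gamma_D)}/a_n<1/n$ while $\|\mathrm{id}/a_n\|_{L^1(\Gamma_D)}\to 0$, also $z_n|_{\Gamma_D}\to 0$ in $L^1(\Gamma_D)$. Repeating the compactness and trace argument of the previous paragraph gives $z_n\to z$ strongly in $W^{1,q}(\Om;\RR^d)$ with $z$ a constant vector and $z|_{\Gamma_D}=0$, whence $z\equiv 0$; but then $\|z\|_{W^{1,q}(\Om)}=\lim_n\|z_n\|_{W^{1,q}(\Om)}=1$, a contradiction that closes the argument.

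I do not expect a genuine obstacle here. The two points requiring a little care are that the scheme must run uniformly in $q\ge 1$ --- for $q=1$ the space $W^{1,1}(\Om;\RR^d)$ is not reflexive, but on the Lipschitz domain $\Om$ the compact embedding into $L^1$ and the boundedness of the trace into $L^1(\partial\Om)$ still hold, and these are all that is used --- and the elementary geometric remark that $\mathcal H^{d-1}(\Gamma_D)>0$ prevents $\mathrm{id}$ from being $\mathcal H^{d-1}$-a.e.\ constant on $\Gamma_D$, which is exactly what rules out both limiting configurations.
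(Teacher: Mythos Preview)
Your proposal is correct and follows essentially the same compactness--contradiction scheme as the paper's own proof: assume failure, split into the cases where the contradicting sequence is bounded or unbounded in $W^{1,q}$, and in the unbounded case rescale to reach a constant with zero trace on $\Gamma_D$. Your version is in fact slightly more robust at $q=1$, since you extract strong $W^{1,q}$-convergence directly via Rellich--Kondrachov plus $\nabla y_n\to 0$ in $L^q$, whereas the paper invokes weak $W^{1,q}$-compactness, which is delicate when $q=1$.
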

\begin{proof}[ Proof of Lemma \ref{Poincare}]
	We argue by contradiction. Let the sequence $(y_k)_{k \in \NN} \subset W^{1,q}(\Om;\RR^d)$ be such that  
	\begin{equation}\label{contradict1}
	\|\nabla y_k\|_{L^q(\Om)} + \|y_k-\text{id}\|_{L^1(\Gamma_D)} < \frac{1}{k} \|y_k\|_{L^q(\Om)}.
	\end{equation}
	We claim that $\|y_k\|_{L^q(\Om)} \to \infty$. If this would not be the case, we would have $\|y_k\|_{W^{1,q}(\Om)} \le C$ and can pick a (not relabeled) subsequence $y_k$ converging to $y$ weakly in $W^{1,q}(\Om)$. By the trace theorem, the traces would also converge strongly in $L^1(\partial \Om)$. Moreover, by \eqref{contradict1},
	\begin{equation*}
	\|\nabla y\|_{L^q(\Om)} + \|y-\text{id}\|_{L^1(\Gamma_D)} \le \liminf_{k\to \infty} \Big(\|\nabla y_k\|_{L^q(\Om)} + \|y_k-\text{id}\|_{L^1(\Gamma_D)}\Big) = 0.
	\end{equation*}
	This would imply $\nabla y = 0$  in  $\Om$ and $y = \mathrm{id}$ on $\Gamma_D$. A contradiction.  Hence  $\|y_k\|_{L^q(\Om)} \to \infty$.\\
	We now rescale the sequence by setting
	\begin{equation*}
	w_k := \frac{y_k}{\|y_k\|_{L^q(\Om)}}
	\end{equation*} 
	and note that
	\begin{equation*}
	\|w_k\|_{L^q(\Om)} = 1 \qquad  \text{and}  \qquad \|\nabla w_k\|_{L^q(\Om)} + \|w_k- \lambda_k \text{id} \|_{L^1(\Gamma_D)} < \frac{1}{k}
	\end{equation*}
	where  $\lambda_k = \|y_k\|_{L^q(\Om)}^{-1}$ tends to 0.
	Then, we  choose  a (not relabeled) subsequence $w_k$ converging to  some $w$ weakly in $W^{1,q}(\Om)$, strongly in $L^q(\Om)$, and such that the traces converge strongly in $L^1(\partial \Om)$. This leads to $\nabla w = 0$ in $\Om$ and $w = 0$ on $\Gamma_D$. Since $\Om$ is connected, this forces $w=0$ in $\Om$ and contradicts the fact that $\|w\|_{L^q(\Om)}=\lim_{k\to \infty}\|w_k\|_{L^q(\Om)} = 1$.
\end{proof}

We use Lemma \ref{Poincare} in combination with the chain rule \eqref{chain} and H\"older's inequality to estimate
\begin{align}
\label{Poincare_all}
\|y^n\|_{W^{1,q}(\Om)}^q &\le C \left(\|\nabla \ye^n\|_{L^\qe(\yp^n(\Om))}^\qe + \|\nabla \yp^n\|_{L^\qp(\Om)}^\qp + \|\ye^n\circ\yp^n - \text{id}\|_{L^1(\Gamma_D)}^q\right) 
%&\le C \left(\|\nabla \ye^n\|_{L^\qe(\yp^n(\Om))}^\qe + \|\nabla \yp^n\|_{L^\qp(\Om)}^\qp + |\Gamma_D|^{q/2} \|\ye^n\circ\yp^n - \text{id}\|_{L^2(\Gamma_D)}^q \right) \nonumber \\ 
\overset{\eqref{coercive:1}}{\le} C.
\end{align}
We further remark that $W^{1,q}(\Om;\RR^d)$ embeds into $L^{q^*}(\Om;\RR^d)$ with $q^* > \qe$. This leads to
\begin{equation*}
\|\ye^n\|_{L^\qe(\yp^n(\Om))} = \|y^n\|_{L^\qe(\Om)} \le C\|y^n\|_{W^{1,q}(\Om)} \le C.
\end{equation*}
Altogether, we conclude that
\begin{equation*}
%\label{coercivitybound}
\|y^n\|_{W^{1,q}(\Om)} + \|\ye^n\|_{W^{1,\qe}(\yp^n(\Om))} + \|\yp^n\|_{W^{1,\qp}(\Om)} \le C.
\end{equation*}
This bound implies that there exists a (not relabeled) subsequence such that  $(\ye^n,\yp^n)$ converges to $(\ye,\yp)$ in $\Q$  which concludes the proof of \eqref{compactness}.

In Section \ref{closedness} we have seen that  $\Q$ is closed under this convergence, consequently $(\ye,\yp) \in \Q$. Furthermore, by the continuity of the trace operator, we have $y^n \to y$ strongly in $L^{q^\#}(\partial \Om)$, where $q^\# > (d-1)^2 \ge 1$. This yields
\begin{equation}
\label{lsc1}
\int_{\Gamma_D} |y(x) - x| \, \d \mathcal H^{d-1}(x) = \lim_{n\to \infty} \int_{\Gamma_D} |y^n(x) - x| \, \d \mathcal H^{d-1}(x). 
\end{equation}
 By the weak continuity of the loading term, we have that $\langle
\ell(t_i),y^n \rangle$ converges to $\langle \ell(t_i),y \rangle$. The weak continuity of the minors entails that $\cof \nabla \yp^n \wto \cof \nabla \yp$ in $L^{\qp/(d-1)}(\Om;\RR^d)$. In combination with polyconvexity \eqref{polyconvex:2}, we deduce
\begin{equation}
\label{lsc2}
\int_{\Om} W_{\text{p}}(\nabla \yp(x)) \ \d x\le \liminf_{n \to \infty} \int_{\Om} W_{\text{p}}(\nabla \yp^n(x)) \ \d x.
\end{equation}
For every fixed $K \subset \subset \yp(\Om)$, again by weak continuity
of the minors (recall that $\qe > d$) and polyconvexity \eqref{polyconvex:1}, we have 
\begin{equation}
\label{lsc3}
\int_{K} W_{\text{el}}(\nabla \ye(\xi)) \ \d \xi \le \liminf_{n \to \infty} \int_{\yp^n(\Om)} W_{\text{el}}(\nabla \ye^n(\xi)) \ \d \xi.
\end{equation}
Letting $K$ tend to $\yp(\Om)$ in \eqref{lsc3}, together with \eqref{lsc1} and \eqref{lsc2} we have shown lower semi-continuity of the energy, namely
\begin{equation}
\label{lower-semicontinuity:energy}
\E(t_i,\ye,\yp) \le \liminf_{n\to \infty} \E(t_i,\ye^n,\yp^n).
\end{equation}
In a similar way, using polyconvexity of $D$ \eqref{polyconvexity:D},
we get
\begin{align*}
  \D(\nabla \ypii,\nabla \yp) &= \int_\Omega D (\nabla \yp(\nabla
  \ypii)^{-1})\, \d x \\
&= \int_\Omega \haz D (\nabla \yp(\nabla
  \ypii)^{-1}, \cof (\nabla \yp(\nabla
  \ypii)^{-1}))\, \d x\\
&=\int_\Omega \haz D (\nabla \yp(\nabla
  \ypii)^{-1}, \cof (\nabla \yp)\,\cof (\nabla
  \ypii)^{-1})\, \d x \\
&
\leq \liminf_{n\to \infty}
\int_\Omega \haz D (\nabla \yp^n(\nabla
  \ypii)^{-1}, \cof (\nabla \yp^n)\cof (\nabla
  \ypii)^{-1})\, \d x \\
&= \liminf_{n\to \infty}\D(\nabla \ypii,\nabla \yp^n),
\end{align*}
where we also used the fact that 
\begin{align*}
  &\cof (\nabla \yp^n)\cof (\nabla
  \ypii)^{-1} = \cof (\nabla \yp^n) \nabla
  \ypii^{T} \\
&\wto \cof (\nabla \yp) \nabla
  \ypii^{T}  =  \cof (\nabla \yp)\cof (\nabla
  \ypii)^{-1} 
\end{align*}
in $L^{q_{\rm p}/d}(\Omega;\RR^{d\times d})$.
This shows that $(\ye,\yp)$ is a minimizer of \eqref{incremental-minimization}.

The discrete stability condition $\mathrm{(S_{discr})}$ can be deduced easily by testing \eqref{incremental-minimization} with a competitor $(\hye,\hyp) \in \Q$ and using the triangle inequality for $\D$. The discrete energy inequality $\mathrm{(E_{discr})}$ is shown below in \eqref{energy-inequality:n}.

\subsection{Energy inequality}
\label{energy-inequality}
Take a sequence of partitions $\Pi_n=\{0=t_0^n < t_1^n < \dots <
t_{N(n)}^n=T\}, \ n \in \NN,$ with fineness
$\max_{i=1,\dots, N(n)} (t^n_i - t^n_{i-1})$ tending to 0 as $n$ goes to $\infty$. For fixed $n$ we iteratively choose $(\yei^n,\ypi^n) \in \Q$, $i=1,\dots,N(n)$, solving the incremental minimization problem \eqref{incremental-minimization},   consider the right-continuous, piecewise constant approximation as in \eqref{approx:def}, 
%\begin{equation*}
%label{approx:def}
%(\ye^n,\yp^n)(t) = (\yeii^n,\ypii^n) \quad \text{for }t \in [t_{i-1}^n,t_i^n), \quad (\ye^n,\yp^n)(T) = (y_{\mathrm{e}(N(n))}^n,y_{\mathrm{p}(N(n))}^n)
%\end{equation*}
and set
\begin{equation*}
y^n(t) = \ye^n(t) \circ \yp^n(t).
\end{equation*}
Testing \eqref{incremental-minimization} against $(\yeii^n,\ypii^n)$, we get
\begin{equation*}
\E(t_i^n,\yei^n,\ypi^n) - \E(t_{i-1}^n,\yeii^n,\ypii^n) + \D(\nabla \ypii^n,\nabla \ypi^n) \le - \int_{t_{i-1}^n}^{t_i^n} \partial_t \E(s,\yeii^n,\ypii^n) \, \d s.
\end{equation*}

Summing over $i$, we arrive at
\begin{equation}
\label{energy-inequality:n}
\E(t_k^n,\yek,\ypk) - \E(t_j^n,\yej,\ypj) + \Diss(\nabla \yp^n;t_j^n,t_k^n) \le - \int_{t_j^n}^{t_k^n} \partial_t \E(s,y^n(s)) \,\d s
\end{equation}
for every $0 \le j \le k \le N(n)$.  We estimate the right-hand side by
\begin{equation*}
|\partial_t \E(s,y^n(s))| \le \zeta(s)\|y^n(s)\|_{W^{1,q}(\Om)},
\end{equation*}
where $\zeta(s):=\|\dot \ell(s)\|_{W^{1,q}(\Om)^*}$, by assumption \eqref{load:reg}, is integrable. By estimate \eqref{Poincare_all} and assumptions \eqref{growth:elastic}-\eqref{coercive:plastic}, we have
\begin{equation*}
\|y^n(s)\|_{W^{1,q}(\Om)} \le C(1+\E(s,\ye^n(s),\yp^n(s))).
\end{equation*}
Therefore, altogether

\begin{equation*}
\E(t,\ye^n(t),\yp^n(t)) - \E(s,\ye^n(s),\yp^n(s)) + \Diss(\nabla \yp^n;s,t) \le C \int_{s}^{t} \zeta(r)(1+ \E(r,y^n(r))) \,\d r
\end{equation*}
for every $s,t\in \Pi_n, s \le t$.  By virtue of Gronwall's inequality, using the integrability of $\rho$, we find
\begin{equation*}
\sup_{t \in \Pi_n} \E(t,\ye^n(t),\yp^n(t)) \le C.
\end{equation*}
Since $\E$ is absolutely continuous in time and the approximate solution $(\ye^n,\yp^n)$ is piecewise constant, we deduce
\begin{equation}
\label{bound}
\sup_{t \in [0,T]} \E(t,\ye^n(t),\yp^n(t)) + \Diss(\nabla \yp^n;0,T)\le C.
\end{equation}
We now prepare an intermediate result.
\begin{lemma}[Lower-semicontinuity of $\D$ in both arguments]
	\label{lsc:D}
	Let $\yp^n \wto \yp$ and $\ypo^n \wto \ypo$ in $W^{1,\qp}(\Om)$ with $\qp > d(d-1)$ such that $\det \nabla \ypo^n = 1$ a.e. and $|\Om| \le |\ypo^n(\Om)|$ for every $n \in \NN$. Then,
	\begin{equation*}
	\D(\nabla \ypo,\nabla \yp) \le \liminf_{n \to \infty} \ \D(\nabla \ypo^n,\nabla \yp^n).
	\end{equation*}
\end{lemma}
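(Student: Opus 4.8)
The plan is to repeat the polyconvexity computation already carried out in Subsection~\ref{existence:incremental} for the term $\D(\nabla\ypii,\nabla\yp^n)$, but now allowing \emph{both} plastic deformations to vary. By the polyconvexity assumption \eqref{polyconvexity:D},
\[
\D(\nabla\ypo^n,\nabla\yp^n)=\int_\Om \haz D\big(\nabla\yp^n(\nabla\ypo^n)^{-1},\ \cof(\nabla\yp^n(\nabla\ypo^n)^{-1})\big)\,\d x ,
\]
with $\haz D$ convex and nonnegative, so it suffices to show that the two matrix fields appearing as arguments of $\haz D$ converge weakly and then to invoke the classical weak lower semicontinuity of convex integral functionals (passing, if needed, to a subsequence realizing the lower limit). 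Since $\det\nabla\ypo^n=1$ a.e.\ one has $(\nabla\ypo^n)^{-1}=(\cof\nabla\ypo^n)^{T}$ and, by multiplicativity of the cofactor, $\cof(\nabla\yp^n(\nabla\ypo^n)^{-1})=\cof(\nabla\yp^n)(\nabla\ypo^n)^{T}$; hence both arguments of $\haz D$ are bilinear expressions in $\nabla\yp^n$ and $\nabla\ypo^n$.

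The crux — and the only genuinely delicate point — is the weak continuity of these two bilinear expressions: a product of two merely weakly convergent gradient fields need not converge to the product of the limits, so the weak--strong argument used in Subsection~\ref{existence:incremental} (where the first argument was fixed) is no longer available. The structure that rescues the argument is that of the div--curl lemma, which here takes the concrete form of weak continuity of minors of suitable mixed maps. Expanding a determinant along a row, the $(i,j)$-entry of $\nabla\yp^n(\cof\nabla\ypo^n)^{T}=\nabla\yp^n(\nabla\ypo^n)^{-1}$ equals the Jacobian determinant $\det\nabla v^n_{ij}$, where $v^n_{ij}\in W^{1,\qp}(\Om;\RR^d)$ is obtained from $\ypo^n$ by replacing its $j$-th component with the $i$-th component of $\yp^n$; likewise, the $(i,j)$-entry of $\cof(\nabla\yp^n)(\nabla\ypo^n)^{T}=\cof(\nabla\yp^n(\nabla\ypo^n)^{-1})$ equals $\det\nabla w^n_{ij}$, where $w^n_{ij}$ is obtained from $\yp^n$ by replacing its $i$-th component with the $j$-th component of $\ypo^n$. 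Because $\yp^n\wto\yp$ and $\ypo^n\wto\ypo$ in $W^{1,\qp}(\Om;\RR^d)$ with $\qp>d(d-1)\ge d$, the maps $v^n_{ij}$ and $w^n_{ij}$ are bounded in $W^{1,\qp}(\Om;\RR^d)$ and converge weakly therein to the maps $v_{ij}$, $w_{ij}$ built in the same way from $\yp$ and $\ypo$, so the weak continuity of the Jacobian yields $\det\nabla v^n_{ij}\wto\det\nabla v_{ij}$ and $\det\nabla w^n_{ij}\wto\det\nabla w_{ij}$ in $L^{\qp/d}(\Om;\RR)$.

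It remains to identify the limits. Weak continuity of the determinant also forces $\det\nabla\ypo=1$ a.e.\ in $\Om$, whence the same row-expansion identities applied to $\yp$ and $\ypo$ give $\det\nabla v_{ij}=(\nabla\yp(\nabla\ypo)^{-1})_{ij}$ and $\det\nabla w_{ij}=\cof(\nabla\yp(\nabla\ypo)^{-1})_{ij}$ for all $i,j$. Therefore
\[
\nabla\yp^n(\nabla\ypo^n)^{-1}\wto\nabla\yp(\nabla\ypo)^{-1},\qquad \cof\big(\nabla\yp^n(\nabla\ypo^n)^{-1}\big)\wto\cof\big(\nabla\yp(\nabla\ypo)^{-1}\big)
\]
in $L^{\qp/d}(\Om;\RR^{d\times d})$, and the weak lower semicontinuity of $u\mapsto\int_\Om\haz D(u)\,\d x$ for the convex nonnegative integrand $\haz D$ concludes the proof. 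The whole argument hinges on the minors/div--curl structure of the middle paragraph, which is exactly what allows one to pass to the limit in the products of the two weakly convergent gradients using only the first-order integrability $\qp>d(d-1)$, without any regularizing gradient term for $\yp$ or $\ypo$.
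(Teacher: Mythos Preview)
Your proof is correct and takes a genuinely different route from the paper. The paper changes variables to the intermediate configuration: it sets $v^n=\yp^n\circ(\ypo^n)^{-1}$, rewrites $\D(\nabla\ypo^n,\nabla\yp^n)=\int_{\ypo^n(\Om)}D(\nabla v^n)\,\d\xi$, shows that $v^n$ is bounded in $W^{1,\qp/d}$ on compact subsets $K\subset\subset\ypo(\Om)$, and then invokes polyconvexity of $D$ together with weak continuity of the minors of $\nabla v^n$ (this is where $\qp/d>d-1$ enters), before exhausting $\ypo(\Om)$ by compacta. Your argument instead stays on the fixed reference domain $\Om$ and exploits the algebraic identity that each entry of $\nabla\yp^n(\nabla\ypo^n)^{-1}$ and of its cofactor is the Jacobian determinant of a map obtained by swapping one component between $\yp^n$ and $\ypo^n$; weak continuity of these $d\times d$ Jacobians in $W^{1,\qp}$ then gives the required weak convergence of both arguments of $\haz D$ directly. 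What your approach buys: it avoids the change of variables, hence does not use the global invertibility of $\ypo^n$ (so the hypothesis $|\Om|\le|\ypo^n(\Om)|$ is actually not needed in your argument), and it dispenses with the compact-exhaustion step. What the paper's approach buys: it makes the structural analogy with the elastic term transparent (both become integrals of a polyconvex density over a moving domain), and it naturally feeds into the Eulerian/Lagrangian philosophy of the whole paper.
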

\begin{proof}[ Proof of Lemma \ref{lsc:D}]
	We  rely on  the assumption $\qp \ge d(d-1)$ and define 
	\begin{equation*}
	v^n = \yp^n \circ (\ypo^n)^{-1},
	\end{equation*} 
	where the global inverse $(\ypo^n)^{-1}$ is bounded in
        $W^{1,\qp/(d-1)}(\ypo^n(\Om))$, since we have that $(\nabla \ypo^n)^{-1} = (\cof \nabla\ypo^n)^T$.
	We rewrite
	\begin{equation*}
	\D(\nabla \ypo^n,\nabla \yp^n) = \int_{\ypo^n(\Om)} D(\nabla v^n(\xi)) \, \d \xi
	\end{equation*}
	and estimate
	\begin{subequations}
	\label{est}
	\begin{align}
	\label{est:1}
	\|v^n \|_{L^{\qp/d}(\ypo^n(\Om))} &= \|\yp^n\|_{L^{\qp/d}(\Om)}
        \le |\Om|^{(d-1)/\qp} \|\yp^n\|_{L^{\qp}(\Om)} \le C,\\
	\|\nabla v^n \|_{L^{\qp/d}(\ypo^n(\Om))} &= \|\nabla\yp^n
        (\nabla\ypo^n)^{-1}\|_{L^{\qp/d}(\Om)} \nonumber\\
 &\le \|\nabla\yp^n\|_{L^\qp(\Om)} \|\cof \nabla\ypo^n\|_{L^{\qp/(d-1)}(\Om)} \le C. 	\label{est:2}
	\end{align}
	\end{subequations}
	Let $K$ be a compact subset of $\ypo(\Om)$. Since $\ypo^n \to \ypo$ uniformly in $\overline \Om$, there exists $n_K \in \NN$ such that for all $n \ge n_K$, we have $K \subset \ypo^n(\Om)$. Using estimates \eqref{est},  we choose a (not relabeled) subsequence such that
	\begin{equation*}
	v^n \wto v \quad \text{in } W^{1,\qp/d}(K),
	\end{equation*}
	where $v = \yp \circ \ypo^{-1}$ on $K$. As $\qp/d > d-1$, we conclude, by using the polyconvexity \eqref{polyconvexity:D} and the weak continuity of the minors of $\nabla v$, that
	\begin{equation*}
	\int_{K} D(\nabla v(\xi)) \, \d \xi \le \liminf_{n \to \infty} \int_{K} D(\nabla v^n(\xi)) \, \d \xi \le \liminf_{n \to \infty} \int_{\ypo^n(\Om)} D(\nabla v^n(\xi)) \, \d \xi.
	\end{equation*}
	Now it suffices to consider an increasing sequence of compact subsets exhausting $\ypo(\Om)$. By further extracting a diagonal sequence, we get that $\yp = v \circ \ypo$ on $\Om$ and the statement follows.
\end{proof}

We proceed with the proof of the energy inequality by noting that \eqref{bound} together with Lemma \ref{lsc:D} allows us to use Helly's Selection Principle \cite[Thm. B.5.13]{Mielke-Roubicek}. Namely, there exists a subsequence $(n_k)_{k\in \NN}$, a function $\yp: [0,T] \to W^{1,\qp}(\Om)$, and a nondecreasing function $\delta: [0,T] \to [0,\infty)$ such that

\begin{subequations}
	\label{Helly}
	\begin{align} 
	&\yp^{n_k}(t) \wto \yp(t) \text{ in } W^{1,\qp}(\Om), \\
	&\Diss(\nabla \yp^{n_k};0,t) \to \delta(t), \label{Helly:3}\\
	& \Diss(\nabla \yp;s,t) \le \delta(t) - \delta(s) \label{Helly:4}
	\end{align}
\end{subequations}
for every $s,t \in [0,T]$.  By defining $\theta_n(s) := -\langle \dot
\ell(s), y^n(s) \rangle$ and observing that $\theta_n$ is
equiintegrable, we can use the Dunford-Pettis Theorem (see \cite{DunPet} or \cite[Theorem B.3.8]{Mielke-Roubicek}) to extract a further (not relabeled) subsequence satisfying
\begin{equation}
\label{Dunford-Pettis}
\theta_{n_k} \wto \theta \quad \text{in } L^1(0,T).
\end{equation}
Fix now some $t\in [0,T]$ and define 
\begin{equation*}
\tau^n := \min\{\tau \in \Pi_n: \tau \ge t\}
\end{equation*}
 such that $\tau^n \ge t, \tau^n \to t$. We can directly pass to the $\liminf$ in the dissipation
 
\begin{equation*}
%\Diss(\nabla \yp;0,t) \overset{\eqref{Helly:4}}{\le} 
\delta(t) \overset{\eqref{Helly:3}}{=} \lim_{k \to \infty} \Diss(\nabla \yp^{n_k};0,t) \le \liminf_{k \to \infty} \Diss(\nabla \yp^{n_k};0,\tau^{n_k}).
\end{equation*}

Moreover, by the energy bound \eqref{bound}, we can follow the
argument leading to \eqref{compactness} and
choose a $t$-dependent subsequence $(N_k^t)_{k \in \NN}$ of $(n_k)_{k
  \in \NN}$ such that $(\ye^{N_k^t}(t),\yp^{N_k^t}(t))$ converges to $(\ye(t),
\yp(t))$ in $\Q$ and $y^n(t) \wto \ye(t)\circ \yp(t)=:y(t)$ in
$W^{1,q}(\Om;\RR^d)$. Additionally, by extracting a further subsequence, we guarantee that 
\begin{equation*}
\theta^{N_k^t}(t) \to \limsup_{k \to \infty} \theta^{n_k}(t) := \theta_{\rm sup}(t).
\end{equation*}

Since $y^{N_k^t}(t) \wto y(t)$ in $W^{1,q}(\Om)$, it easily follows that $$\theta_{\rm sup}(t) = \lim_{k\to \infty} \theta^{N_k^t}(t) = \lim_{k\to \infty} \langle \dot \ell(t), y^{N_k^t}(t) \rangle = \langle \dot \ell(t), y(t) \rangle.$$
Furthermore,
\begin{equation*}
\E(t,\ye(t),\yp(t)) \le \liminf_{n \to \infty} \E(\tau^{N_k^t},\ye^{N_k^t}(t),\yp^{N_k^t}(t)),
\end{equation*} 
see the discussion in Section \ref{existence:incremental} leading to \eqref{lower-semicontinuity:energy} and notice that $(\ye^{n},\yp^{n})(t) = (\ye^{n}, \yp^{n})(\tau^{n})$. 
At this point, we can pass to the $\limsup$ on the right-hand side of inequality \eqref{energy-inequality:n}, using convergence \eqref{Dunford-Pettis} and $\theta \le \theta_{\rm sup}$. As the energy is continuous in $t$, we conclude that

\begin{equation*}
\E(t,{\ye}(t),{\yp}(t)) - \E(0,\yeo,\ypo) + \delta(t) \le - \int_{0}^{t} \langle \dot \ell(s),y(s) \rangle \, \d s
\end{equation*}

as desired.

\subsection{Semistability}
\label{semistability}
 In this section, we prove the semistability condition $\mathrm{(S_{semi})}$. 
Fix $t \in [0,T]$ and define $\tau^n$ as  above. Let
$(\ye^n(t),\yp^n(t))$ be the right-continuous approximation defined in
\eqref{approx:def} and note that $(\ye^n,\yp^n)(t) =
(\ye^n,\yp^n)(\tau^n)$. By testing the minimum in
\eqref{incremental-minimization} at time  $\tau^n$  against competitors
$(\hyen,\yp^n(t))$ having the same plastic component,  we get the following \textit{discrete} semistability:
\begin{equation}
\label{semistability:n}
\E(\tau_n,\ye^n(t),\yp^n(t)) \le \E(\tau_n,\hyen, \yp^n(t))
\end{equation}
for every $\hyen$ satisfying $(\hyen,\yp^n(t))\in \Q$. By following
the discussion of Subsections \ref{existence:incremental}-\ref{energy-inequality}, we can choose a (not relabeled) subsequence such that $(\ye^n(t),\yp^n(t)) \to (\ye(t),\yp(t))$ in $\Q$. Note that this subsequence may be $t$-dependent as in Section \ref{energy-inequality}.
 We aim at showing the corresponding {\it limit} semistability:
\begin{equation}
\label{semistability:again}
\E(t,\ye(t),\yp(t)) \le \E(t,\hye, \yp(t))
\end{equation}
for every $\hye$ satisfying $(\hye,\yp(t))\in \Q$. This is done by passing to the limit in \eqref{semistability:n} with a suitable recovery sequence in the spirit of \cite{MRS08}. 

\begin{lemma}[Existence of recovery sequences]\label{mrs}
	 Let $(\ye^n,\yp^n)_{n \in \NN} \subset \Q$ converge to $(\ye,\yp)$ in $\Q$. Then, for every $\hye$ with $(\hye,\yp) \in \Q$ there exists a sequence $\hyen$ with $(\hyen,\yp^n) \in \Q$ satisfying
	\begin{equation}
	\label{limsup:est}
	\limsup_{n \to \infty} \, \Big( \E(\tau_n, \hyen, \yp^n) - \E(\tau_n,\ye^n,\yp^n) \, \Big) \le \E(t,\hye,\yp) - \E(t,\ye,\yp).
	\end{equation}
\end{lemma}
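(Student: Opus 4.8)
\emph{Strategy.} The plan is to transfer the competitor $\hye$, which is defined on the \emph{fixed} limit domain $\yp(\Om)$, onto the \emph{moving} domains $\yp^n(\Om)$ by a single Sobolev extension. Since $\yp(\Om)\in\J_{\eps,\delta}$ is an extension domain \cite[Theorem 1]{Jones}, there is an extension $E\hye\in W^{1,\qe}(\RR^d;\RR^d)$ with $E\hye=\hye$ on $\yp(\Om)$; as $\qe>d$, it has a globally continuous representative, still denoted $E\hye$. I would then set $\hyen:=(E\hye)|_{\yp^n(\Om)}\in W^{1,\qe}(\yp^n(\Om);\RR^d)$. Because membership in $\Q$ constrains only the plastic component, $(\hyen,\yp^n)\in\Q$ is automatic. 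Two preliminary remarks: first, the hardening term $\int_\Om\Wp(\nabla\yp^n)$ cancels identically in $\E(\tau_n,\hyen,\yp^n)-\E(\tau_n,\ye^n,\yp^n)$, so only the elastic energy, the soft boundary term and the loading term remain; second, we may assume $\sup_n\E(\tau_n,\ye^n,\yp^n)<\infty$ — which holds in the application via \eqref{bound} — because otherwise, along a subsequence attaining the $\limsup$, $\int_{\yp^n(\Om)}\We(\nabla\ye^n)\to+\infty$ by \eqref{growth:elastic} and the difference tends to $-\infty$, leaving nothing to prove. Under this bound $\sup_n\|\nabla\ye^n\|_{L^\qe(\yp^n(\Om))}<\infty$ and, by Lemma \ref{Poincare} and \eqref{Poincare_all}, $y^n:=\ye^n\circ\yp^n$ is bounded in $W^{1,q}(\Om;\RR^d)$; running the compactness argument of Section \ref{existence:incremental} one gets $y^n\wto\ye\circ\yp$ in $W^{1,q}(\Om)$.

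\emph{Four limit passages.} All $\yp^n(\Om)$ lie in a fixed ball $B_R$ by uniform convergence $\yp^n\to\yp$. \emph{(i)} For the elastic energy of $\hyen$ I would prove $\int_{\yp^n(\Om)}\We(\nabla\hyen)=\int_{\yp^n(\Om)}\We(\nabla(E\hye))\to\int_{\yp(\Om)}\We(\nabla\hye)$. Since $\We(\nabla(E\hye))\in L^1(B_R)$ by \eqref{growth:elastic}, this reduces to $\chi_{\yp^n(\Om)}\to\chi_{\yp(\Om)}$ in $L^1(\RR^d)$, which follows from the volume identities $|\yp^n(\Om)|=|\Om|=|\yp(\Om)|$ (consequence of $\det\nabla\yp^n\equiv1$ and the Ciarlet-Ne\v cas condition) together with the observation recalled in Section \ref{states} that any $K\subset\subset\yp(\Om)$ lies in $\yp^n(\Om)$ for $n$ large: exhausting $\yp(\Om)$ by such $K$ makes $|\yp^n(\Om)\,\triangle\,\yp(\Om)|$ arbitrarily small. \emph{(ii)} For the elastic energy of $\ye^n$ I would reproduce the lower semicontinuity argument behind \eqref{lsc3}: for $K\subset\subset\yp(\Om)$, polyconvexity \eqref{polyconvex:1} and weak continuity of the minors of $\nabla\ye^n$ yield $\int_K\We(\nabla\ye)\le\liminf_n\int_K\We(\nabla\ye^n)$, while $\int_{\yp^n(\Om)\setminus K}\We(\nabla\ye^n)\ge-c^{-1}|\yp^n(\Om)\setminus K|$ by the lower bound in \eqref{growth:elastic}; letting $K\uparrow\yp(\Om)$ gives $\int_{\yp(\Om)}\We(\nabla\ye)\le\liminf_n\int_{\yp^n(\Om)}\We(\nabla\ye^n)$. \emph{(iii)} For the soft boundary term, continuity of $E\hye$ and uniform convergence $\yp^n\to\yp$ on $\overline\Om$ give $\hyen\circ\yp^n=(E\hye)\circ\yp^n\to\hye\circ\yp$ uniformly, hence $\int_{\Gamma_D}|\hyen(\yp^n(x))-x|\,\d\mathcal{H}^{d-1}(x)\to\int_{\Gamma_D}|\hye(\yp(x))-x|\,\d\mathcal{H}^{d-1}(x)$, whereas for $\ye^n$ the trace theorem applied to the $W^{1,q}$-bounded $y^n$ gives $y^n\to\ye\circ\yp$ in $L^{q^\#}(\partial\Om)$ and hence convergence of $\int_{\Gamma_D}|y^n(x)-x|\,\d\mathcal{H}^{d-1}(x)$ as in \eqref{lsc1}. \emph{(iv)} For the loading term, the chain rule \eqref{chain} and the change of variables \eqref{change} with $\det\nabla\yp^n\equiv1$ bound $\|\nabla(\hyen\circ\yp^n)\|_{L^q(\Om)}\le\|\nabla(E\hye)\|_{L^\qe(B_R)}\|\nabla\yp^n\|_{L^\qp(\Om)}\le C$; together with the uniform convergence this forces $\hyen\circ\yp^n\wto\hye\circ\yp$ in $W^{1,q}(\Om;\RR^d)$, and since $\ell(\tau_n)\to\ell(t)$ strongly in $(W^{1,q}(\Om))^*$ by \eqref{load:reg} we obtain $\langle\ell(\tau_n),\hyen\circ\yp^n\rangle\to\langle\ell(t),\hye\circ\yp\rangle$ and, likewise, $\langle\ell(\tau_n),y^n\rangle\to\langle\ell(t),\ye\circ\yp\rangle$.

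\emph{Conclusion.} Taking the $\limsup$ of $\E(\tau_n,\hyen,\yp^n)-\E(\tau_n,\ye^n,\yp^n)$ and using subadditivity of $\limsup$ — items (i), (iii), (iv) are genuine limits, while (ii) only provides $\limsup\bigl(-\int_{\yp^n(\Om)}\We(\nabla\ye^n)\bigr)=-\liminf_n\int_{\yp^n(\Om)}\We(\nabla\ye^n)\le-\int_{\yp(\Om)}\We(\nabla\ye)$ — the right-hand side reassembles exactly into $\E(t,\hye,\yp)-\E(t,\ye,\yp)$, the hardening terms having cancelled, which is \eqref{limsup:est}.

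\emph{Main obstacle.} The crux is the very construction in the first step: since the intermediate configuration is part of the state, $\hye$ and $\ye^n$ live on genuinely different domains, so there is no canonical competitor $\hyen$ on $\yp^n(\Om)$ and their energies cannot be compared directly. The extension property of the Jones $(\eps,\delta)$-domains supplies the competitor in one stroke, and the growth threshold $\qe>d$, which renders $E\hye$ continuous, is then what allows passing to the limit in the boundary and loading contributions. A more technical but also essential point is the $L^1$-convergence $\chi_{\yp^n(\Om)}\to\chi_{\yp(\Om)}$, which relies on the isochoric constraint $\det\nabla\yp^n\equiv1$ and upgrades the recovery elastic energy from mere lower semicontinuity to genuine convergence.
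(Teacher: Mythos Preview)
Your proof is correct and follows essentially the same route as the paper: the recovery sequence is built via the Jones extension $\hyen:=(E\hye)|_{\yp^n(\Om)}$, the hardening term cancels, the elastic energy of $\hyen$ is controlled through $|\yp^n(\Om)\triangle\yp(\Om)|\to 0$, and the remaining terms are handled by lower semicontinuity of $\E$ along $(\ye^n,\yp^n)$. Your treatment of the boundary term via the H\"older continuity of $E\hye$ (from $\qe>d$) is in fact slightly more direct than the paper's, which passes through $W^{1,q}$-boundedness of $\hyen\circ\yp^n$ and the trace theorem; and you make the loading-term convergence explicit where the paper leaves it implicit in the same $W^{1,q}$ argument.
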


\begin{proof}[Proof of Lemma 3.5]
    Let $\hye$ be such that $(\hye,\yp) \in \Q$. As the
    energy $\mathcal E$ is  absolutely continuous  with respect to time \eqref{load:reg}, relation \eqref{limsup:est} follows as soon as we find $\hyen$
    with $(\hyen,\yp^n) \in \Q$ satisfying
    \begin{equation}
    \label{limsup:estnew}
    \limsup_{n \to \infty} \, \Big( \E(t,\hyen, \yp^n) - \E(t,\ye^n,\yp^n) \, \Big) \le \E(t,\hye,\yp) -     \E(t,\ye,\yp)
    \end{equation}
    where now time is fixed.

    In order to check for \eqref{limsup:estnew}, due to the cancellation of the kinematic hardening energy $\Wp$ and the weak lower-semicontinuity \eqref{lower-semicontinuity:energy}, it suffices to show that 	there exists a sequence $(\hyen,\yp^n) \in \Q$ such that
	\begin{equation}\label{elastic:recovery}
	\limsup_{n \to \infty} \int_{\yp^n(\Om)} \We(\nabla \hyen(\xi))\ \d \xi \le \int_{\yp(\Om)} \We(\nabla \hye(\xi))\ \d \xi
	\end{equation}
	and 
	\begin{equation}
	\label{constraints:recovery}
	\lim_{n\to \infty} \int_{\Gamma_D} |\hyen(\yp^n(x)) - x| \, \d \mathcal H^{d-1}(x) = \int_{\Gamma_D} |\hye(\yp(x)) - x| \, \d \mathcal H^{d-1}(x). 
	\end{equation}
	We may assume without loss of generality that
	\begin{equation*}
	\int_{\yp(\Om)} \We(\nabla \hye(\xi))\ \d \xi  < \infty.
	\end{equation*}
	 In particular, $\hye \in W^{1,\qe}(\yp(\Om);\RR^d)$ by the growth assumption \eqref{growth:elastic}. To define the recovery sequence $\hyen$ we use the fact that $\yp(\Om) \in \J_{\eps,\delta}$. By the extension property of $(\eps,\delta)$-domains, there exists a bounded linear operator
	\begin{equation}\label{def:E}
	E: W^{1,\qe}(\yp(\Om);\RR^d) \to W^{1,\qe}(\RR^d;\RR^d)
	\end{equation}
	such that $Eu = u$ in $\yp(\Om)$, and a constant $C$ solely depending on $\eps,\delta,\qe$, and $d$ such that
	\begin{equation}
	\label{extention:bounded}
	\|E u \|_{W^{1,\qe}(\RR^d)} \le C \|u \|_{W^{1,\qe}(\yp(\Om))}
	\end{equation}
	for every $u \in W^{1,\qe}(\yp(\Om);\RR^d)$ \cite[Theorem 1]{Jones}.\\
	Set now
	\begin{equation*}
	%\label{recovery:def}
	\hyen := E \hye \big|_{\yp^n(\Om)}
	\end{equation*}
	and note that this test is admissible, namely $(\hyen,\yp^n) \in \Q$. Then, we split
	\begin{equation*}
	%\label{estimate}
	\int_{\yp^n(\Om)} \We(\nabla \hyen(\xi))\, \d \xi = \int_{\yp^n(\Om) \cap \yp(\Om)} \We(\nabla \hye(\xi))\, \d \xi + \int_{\yp^n(\Om) \setminus \yp(\Om)} \We(\nabla (E \hye)(\xi))\, \d \xi.
	\end{equation*}
	We use the growth condition \eqref{growth:elastic} to control
	\begin{equation}
	\label{growth:est}
	\left| \int_{\yp^n(\Om) \setminus \yp(\Om)} \We(\nabla (E \hye)(\xi))\, \d \xi \right| \le \frac{1}{c} \int_{\yp^n(\Om) \setminus \yp(\Om)} \left( 1 + |\nabla (E\hye)(\xi)|^\qe \right) \ \d \xi 
	\end{equation} 
	and use the convergence $\mathcal L^d(\yp^n(\Om) \setminus \yp(\Om)) \to 0$ as well as $E\hye \in W^{1,\qe}(\RR^d;\RR^d)$, by bound \eqref{extention:bounded}, to deduce the convergence to  0  of the right-hand side of \eqref{growth:est}. 
	Since $\yp^n \to \yp$ uniformly  in  $\overline \Om$, we have $\mathcal L^d(\yp^n(\Om) \triangle \yp(\Om)) \to 0$ as $n\to \infty$, where $\triangle$ denotes the symmetric difference. Thus,
	\begin{equation*}
	\lim_{n \to \infty} \int_{\yp^n(\Om) \cap \yp(\Om)} \We(\nabla \hye(\xi))\, \d \xi = \int_{\yp(\Om)} \We(\nabla \hye(\xi))\, \d \xi.
	\end{equation*}
 This proves  inequality \eqref{elastic:recovery} and  we
 are left with  checking  the convergence \eqref{constraints:recovery}. 
	Observe that by the chain rule \eqref{chain}-\eqref{chain:est} and the boundedness of the extension \eqref{extention:bounded},  we have
	\begin{subequations}
		\begin{align*}
	&	\|\hyen \circ \yp^n\|_{L^q(\Om)} =
                \|E\hye\|_{L^q(\yp^n(\Om))} \le
                |\yp^n(\Om)|^{1/\qp}\|E\hye\|_{L^\qe(\RR^d)} \le C,\\
    &
		\|\nabla(\hyen \circ \yp^n)\|_{L^q(\Om)} \le \|\nabla E\hye\|_{L^\qe(\yp^n(\Om))} \|\nabla \yp^n\|_{L^\qp(\Om)} \le C.
		\end{align*}
	\end{subequations}
	The latter shows that $\hyen \circ \yp^n$
    is bounded in $W^{1,q}(\Om;\RR^d)$. Moreover, we know that
    $\yp^n$ converges uniformly to $\yp$ on $\overline{\Om}$. Hence
    we can choose a (not relabeled) subsequence such that $E\hye
    \circ \yp^n \wto \hye \circ \yp$ in
    $W^{1,q}(\Om;\RR^d)$. This implies that $E\hye \circ \yp^n \to \hye \circ \yp$ in $L^{q^\#}(\partial \Om)$ with $q^\# > (d-1)^2 \ge 1$, so that the convergence
	\begin{equation*}
	\int_{\Gamma_D} |E\hye(\yp^n(x))-x| \ \d \mathcal H^{d-1}(x) \to \int_{\Gamma_D} |\hye(\yp(x))-x| \ \d \mathcal H^{d-1}(x)
	\end{equation*}
	holds. 
	\end{proof}

By using Lemma \ref{mrs}, starting from the discrete
semistability \eqref{semistability:n},  we readily check its
time-continuous counterpart \eqref{semistability:again} for all
times. This concludes the proof of Theorem
\ref{existence:quasistatic}.

\begin{remark}
We wish to point out that, to our best knowledge, necessary and (nontrivial) sufficient  conditions allowing for an extension of a deformation mapping  (as in \eqref{def:E}) but respecting  additionally the orientation-preservation of the map are not known in general. We refer to \cite{BBMK} for some discussion on this issue.  
\end{remark}

\section*{Acknowledgments}
This work has been funded by the Austrian Science Fund (FWF) and Czech
Science Foundation (GA\v CR) through the joint international project
FWF-GA\v CR I\,4052, 19-29646L,  and the scientific and technological cooperation project OeAD-M\v SMT CZ 17/2016,  8J19AT013.
  Support  by the Austrian Science Fund (FWF) projects F\,65
 and W\,1245  is also acknowledged.

\end{document}